\documentclass[reqno,11pt,final]{amsart}

\usepackage[utf8]{inputenc}
\usepackage[russian,english]{babel}
\usepackage{csquotes} 
\usepackage[a4paper,twoside]{geometry}
\usepackage{graphicx}
\usepackage[backend=bibtex,giveninits=true, doi=false, style=alphabetic,bibencoding=utf8,language=auto,autolang=other,maxnames=10, maxalphanames=10]{biblatex}
\usepackage{caption} 
\usepackage[hyperfootnotes=false]{hyperref} 
\hypersetup{
	colorlinks = true,
	linkcolor = {blue},
	urlcolor = {red},
	citecolor = {blue}
}
\usepackage{amsmath}
\usepackage{amsthm}
\usepackage{amssymb}
\usepackage{esint} 
\usepackage{mathrsfs} 
\usepackage{faktor} 
\usepackage{dsfont} 
\usepackage{xcolor}
\usepackage{array}
\usepackage{hhline}
\usepackage{enumitem} 
\usepackage{comment} 
\usepackage[toc,page]{appendix} 
\usepackage{imakeidx} 
\usepackage{xparse} 
\usepackage{mathtools}
\usepackage{fancyhdr} 
\usepackage{ifthen} 
\usepackage{forloop} 
\usepackage{xstring}
\usepackage{tikz}
\usetikzlibrary{babel} 
\usetikzlibrary{cd} 
\usepackage{emptypage} 

\hfuzz=2pt 

\voffset=-1cm \textheight=23cm
\textwidth=16cm \evensidemargin=0.3cm \oddsidemargin=-0.3cm

\usepackage[nameinlink,capitalise]{cleveref} 
\crefname{equation}{}{} 

\newcounter{results}[section] 

\theoremstyle{plain}
\newtheorem{theorem}[results]{Theorem}
\newtheorem{lemma}[results]{Lemma}
\newtheorem{proposition}[results]{Proposition}

\newtheorem*{theorem*}{Theorem}
\newtheorem*{lemma*}{Lemma}
\newtheorem*{proposition*}{Proposition}
\newtheorem*{corollary*}{corollary}
\newtheorem*{exercise*}{Exercise}
\newtheorem*{fact*}{Fact}
\newtheorem*{problem*}{Problem}

\theoremstyle{remark}
\newtheorem{remark}[results]{Remark}

\newtheorem*{remark*}{Remark}
\newtheorem*{question*}{Question}

\theoremstyle{definition}
\newtheorem{definition}[results]{Definition}

\newtheorem*{definition*}{Definition}
\newtheorem*{example*}{Example}

\numberwithin{equation}{section}


\newcommand{\Z}{\ensuremath{\mathbb Z}} 
\newcommand{\R}{\ensuremath{\mathbb R}} 


\newcommand{\scalprod}[2]{\ensuremath{\langle #1, #2\rangle}} 
\newcommand{\Haus}{\ensuremath{\mathscr H}} 



\newcommand{\de}{\ensuremath{\,\mathrm d}}

\DeclareMathOperator{\diam}{diam}

\newcommand{\E}[1]{\ensuremath{\operatorname{\mathbb{E}}\left[#1\right]}}
\renewcommand{\P}[1]{\ensuremath{\operatorname{\mathbb{P}}\left(#1\right)}}


\let\div\undefined
\DeclareMathOperator{\div}{div}




\ifdefined\comma 
	\renewcommand{\comma}{\ensuremath{\, \text{, }}}
\else 
	\newcommand{\comma}{\ensuremath{\, \text{, }}}
\fi



\newcommand{\s}{\mbox{ }}

\newcommand{\mm}{\mathfrak{m}}
\newcommand{\T}{{\mathbb{T}^2}}

\makeatletter
\newcommand{\newreptheorem}[2]{%
  \newtheorem*{rep@#1}{\rep@title}%
  \newenvironment{rep#1}[1]%
    {\def\rep@title{#2 \ref*{##1}}\begin{rep@#1}}%
    {\end{rep@#1}}%
}%
\makeatother
\theoremstyle{plain}
\newreptheorem{theorem}{Theorem}

\addbibresource{optimal_matching.bib}

\renewbibmacro{in:}{} 
\DeclareFieldFormat[article]{pages}{#1} 
\DeclareFieldFormat[book]{pages}{#1}
\DeclareFieldFormat[article]{title}{#1} 
\DeclareFieldFormat[book]{title}{\textit{#1}}

\renewbibmacro*{volume+number+eid}{ 
  \printfield{volume}
  \setunit*{\addcomma \addnbspace}
  \printfield{number}
  \setunit{\addcomma\space}
  \printfield{eid}}
  
\renewbibmacro*{publisher+location+date}{
  \printlist{publisher}
  \iflistundef{location}
    {\setunit*{\addcomma\space}}
    {\setunit*{\addcolon\space}}
  \printlist{location}
  \setunit*{\addcomma\space}
  \usebibmacro{date}
  \newunit}
  
  \title[PDE estimates for random matching with power cost]{Sharp PDE estimates for random two-dimensional bipartite matching with power cost function}
\author{L. Ambrosio}
\address{Scuola Normale Superiore, Pisa}
\email{luigi.ambrosio@sns.it}
\author{F. Vitillaro}
\address{Scuola Normale Superiore, Pisa}
\email{federico.vitillaro@sns.it}
\author{D. Trevisan}
\address{Universit\`a di Pisa}
\email{dario.trevisan@unipi.it}

\date{\today}
  
\begin{document}

\maketitle

\begin{abstract}
We investigate the random bipartite optimal matching problem on a flat torus in two-dimensions, considering general strictly convex power costs of the distance. We extend the successful ansatz first introduced by Caracciolo et al. for the quadratic case, involving a linear Poisson equation, to a non-linear equation of $q$-Poisson type, allowing for a more comprehensive analysis of the optimal transport cost. Our results establish new asymptotic connections between the energy of the solution to the PDE and the optimal transport cost, providing insights on their asymptotic behavior.
\end{abstract}

\section{Introduction and main result}

Let $(X_1, \ldots, X_n)$, $(Y_1, \ldots, Y_n)$ be two sets of $n$ random points independent and uniformly distributed on the flat torus $\T=\R^2/\Z^2$, i.e., with common law given by the Lebesgue measure $\mm$  on $\T$. 
The random bipartite optimal matching problem concerns the study of the optimal coupling (with respect to a certain cost function) of these points, that is,  the optimal transport  from the empirical measure $\mu^n:=\frac 1n \sum_{i=1}^n \delta_{X_i}$ to $\nu_n := \frac 1 n \sum_{i=1}^n \delta_{Y_i}$, in particular in the limit $n \gg 1$.

For the quadratic cost $c(x,y):=\mathbf{d}(x,y)^2$, where $\mathbf{d}$ is the quotient (flat) distance in $\T$, the seminal paper \cite{CLPS} gave a very appealing PDE ansatz on the asymptotic of the expectation of the optimal transport cost, based on a linearization of the Monge-Ampère equation. While it was already known in the literature that, for the cost $c=\mathbf{d}^p$ in dimension $d=2$, the expectation of the optimal transport cost behaves like $(n^{-1} \ln n )^{p/2}$ (for $p=1$ since \cite{AKT}), in \cite{CLPS} they managed to predict the limit coefficient as $1/(2\pi)$ in the case $p=2$, exploiting Fourier analysis and some renormalization procedure. This prediction was then rigorously proven in \cite{AST}, together with a new PDE proof of the classical bounds in \cite{AKT}.

Since then, several works have been using such PDE ansatz to estimate with different degrees of sharpness the asymptotics of random optimal matching costs and their solutions, in several settings. Focusing only on the two-dimensional case, but possibly including more general manifolds than $\T$, we mention here the rigorous results \cite{BobLe19, AG, AGT19, goldman2021quantitative,  ambrosio2022quadratic, goldman2022fluctuation, huesmann2023there, goldman2023almost, clozeau2024annealed} as well as further intriguing predictions from the physical literature \cite{benedetto2020euclidean, benedetto2021random} and refer e.g.\ to the  contribution \cite{ledoux2022optimal} for a more general overview on the subject.

The aim of the present work is to establish new asymptotic connections between the solution of a ``linearized PDE'' and the expectation of the
optimal transport cost, on $\T$, for general $p>1$,  extending the main results in \cite{AST, AG}. Let us mention here that recently other works  focused on two-dimensional random optimal matching problems, beyond the quadratic cost, in particular \cite{koch2024geometric}, where the  quantitative harmonic approximation techniques -- originally in \cite{goldman2021quantitative}, see also the exposition \cite{koch2023lecture} -- are extended to any $p>1$, and the preprint \cite{huesmann2023stationary}, where existence of a $p$-cyclically monotone stationary matching from a Poisson point process to the Lebesgue measure is ruled out for any $p>1$ -- the quadratic case is covered in \cite{huesmann2023there}.

In order to describe here informally our results, we may treat the empirical measures $\mu^n = \rho_0 \mm$, $\nu^n=\rho_1 \mm$ as absolutely continuous with respect to $\mm$. This will be made rigorous by a regularization with the heat kernel $P_t$ on $\T$, as performed in \cite{AST}. We first recall (see e.g., \cite[Remark 5.3]{OT}) that the Kantorovich potential $\phi$ is related to the optimal transport map $T$ by the identity
$$ T(x)=x-|\nabla \phi(x)|^{q-2}\nabla \phi(x),$$
where, throughout the paper, $q= p/(p-1)$ denotes the dual exponent of $p$. Then, the Monge-Ampère equation takes the form
$$\rho_1(x-|\nabla \phi (x)|^{q-2}\nabla \phi(x))\det (\nabla(x-|\nabla \phi (x)|^{q-2}\nabla \phi (x)))=\rho_0(x).$$
This PDE contains three non-linearities: the determinant, the dependence of $\rho_1$ on $\nabla\phi$ and finally, when $p\neq 2$, the
nonlinear term $|\nabla\phi|^{q-2}\nabla \phi$. Our main result shows that in order to obtain a good first-order approximation of the expected value of the transport
cost it is sufficient to remove only the first two non-linearities, keeping the third one. This invokes the ``linearized'' (but still nonlinear!) PDE of $q$-Poisson type
 \begin{equation}\label{Lin-PDE}
-{\rm div}\bigl(|\nabla\phi|^{q-2}\nabla\phi)=\rho_1-\rho_0,\qquad\phi\in H^{1,q}(\T)
\end{equation}
in the sense of distributions, namely
\begin{equation}\label{q-Poisson}\int_\T |\nabla\phi|^{q-2} \langle \nabla\phi, \nabla \eta \rangle \s d\mm=\int (\rho_1-\rho_0)\eta \s d\mm
\qquad\forall\eta \in H^{1,q}(\T),
\end{equation}
 where we always assume, to ensure uniqueness, that $\int_\T\phi\de\mm=0$, yielding the approximation
 \begin{equation}
  |T(x) - x|^p \approx | |\nabla \phi(x)|^{q-2} \nabla \phi(x) |^p = |\nabla \phi(x)|^q.
 \end{equation}


Our main result makes precise such approximation (see \cref{sec:notation} for more details on the notation).

\begin{theorem}[Main result]\label{tmain}
If $(X_i)_{i=1}^\infty$ and $(Y_i)_{i=1}^\infty$ are independent and identically distributed random variables with law $\mm$ on $\T$, then
$$
\lim_{n\to\infty}\biggl(\frac{n}{\ln n}\biggr)^{p/2}\biggl\vert \E{W_p^p(\mu^n,\nu^n)}-\E{\int_\T|\nabla\phi_n|^q\de\mm}\biggr\vert=0
$$
where
\begin{equation}\label{eq:empirical}
\mu^n=\frac 1n\sum_{i=1}^n\delta_{X_i},\qquad\nu^n=\frac 1n\sum_{i=1}^n\delta_{Y_i},
\end{equation}
and $\phi_n$ is the solution to \eqref{q-Poisson} with random right hand side $\rho_1-\rho_0 = \rho_{1,n} - \rho_{0,n}$ and
\begin{equation}\label{eq:regularizedrhoi}
\rho_{0,n}\mm=P_{t_n}\mu^n,\qquad\rho_{1,n}\mm=P_{t_n}\nu^n
\end{equation}
provided $t_n\gg n^{-1}\ln n$ and $\ln (nt_n)\ll\ln n$.
\end{theorem}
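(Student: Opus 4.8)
The plan is to bound the difference $|\E{W_p^p(\mu^n,\nu^n)}-\E{\int_\T|\nabla\phi_n|^q\de\mm}|$ by interpolating between the true Monge--Amp\`ere problem and the linearized $q$-Poisson problem through a one-parameter family of transport problems, and to show that each ``layer'' of nonlinearity we strip off costs only $o((n^{-1}\ln n)^{p/2})$ in expectation. The natural first step is the standard regularization: replace $\mu^n,\nu^n$ by their heat-semigroup mollifications $\rho_{0,n}\mm, \rho_{1,n}\mm$ at time $t_n$, and control $|W_p^p(\mu^n,\nu^n)-W_p^p(\rho_{0,n}\mm,\rho_{1,n}\mm)|$ using the contraction/displacement estimates for the heat flow along $W_p$; the hypotheses $t_n\gg n^{-1}\ln n$ and $\ln(nt_n)\ll\ln n$ are exactly what make this error $o((n^{-1}\ln n)^{p/2})$ in expectation (cf.\ the argument in \cite{AST} for $p=2$, where the first condition guarantees the mollified densities are bounded away from zero with high probability and the second keeps the regularization from destroying the leading-order size). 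From here on one works with smooth, strictly positive densities $\rho_0,\rho_1$ that are $\mm$-close to $1$.

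The core is then an \emph{a priori} comparison, purely deterministic, valid whenever $\|\rho_i-1\|$ is small in a suitable norm: if $\phi$ solves \eqref{q-Poisson} with datum $\rho_1-\rho_0$, then
\begin{equation}\label{eq:apriori}
\Bigl| W_p^p(\rho_0\mm,\rho_1\mm) - \int_\T |\nabla\phi|^q\de\mm \Bigr| \le C\,\omega(\|\rho_0-1\|, \|\rho_1-1\|)\,\int_\T |\nabla\phi|^q\de\mm
\end{equation}
for some modulus $\omega$ vanishing at the origin, together with the matching two-sided bound $\E{\int_\T|\nabla\phi_n|^q\de\mm}\asymp (n^{-1}\ln n)^{p/2}$. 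To prove \eqref{eq:apriori} I would use $\phi$ (or a truncation of it) both as a competitor for the Benamou--Brenier/Kantorovich formulation of $W_p^p$ from above, giving $W_p^p \le \int|\nabla\phi|^q + (\text{error})$, and run the dual estimate -- plug the true Kantorovich potential into the weak formulation of \eqref{q-Poisson} and exploit the uniform convexity of $t\mapsto |t|^q$ -- to get the matching lower bound. The strict convexity of the power cost (the standing assumption $p>1$) is what makes the Kantorovich potential $\phi$ genuinely control the transport map via $T(x)=x-|\nabla\phi|^{q-2}\nabla\phi$, and is what lets us keep the third nonlinearity ``for free'' rather than linearizing it away.

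The main obstacle, and the part requiring the most care, is \eqref{eq:apriori} in the regime $p\neq 2$: unlike the quadratic case the linearized operator $-\div(|\nabla\phi|^{q-2}\nabla\phi)$ is degenerate/singular elliptic, so the usual Fourier-analytic renormalization of \cite{CLPS,AST} is unavailable and one must instead rely on the nonlinear Calder\'on--Zygmund and $W^{1,q}$-regularity theory for the $q$-Laplacian, plus quantitative stability of $q$-harmonic functions under perturbation of the datum. Concretely, one needs: (i) higher integrability $\nabla\phi_n\in L^{q+\delta}$ with good expected bounds, to absorb the Jacobian and $\rho_1\circ T$ nonlinearities; (ii) a quantitative comparison between the solution of \eqref{q-Poisson} and the solution of the ``intermediate'' equation in which only the determinant has been linearized; and (iii) a sub/super-solution or convexity argument to pass from the PDE energy to $W_p^p$. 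Once \eqref{eq:apriori} is established with an explicit modulus, one takes expectations, invokes $\|\rho_{i,n}-1\|\to 0$ in probability (with the relevant norm, again using $\ln(nt_n)\ll\ln n$), splits into the high-probability good event and its small-probability complement -- on the latter using crude deterministic bounds $W_p^p\lesssim 1$ and $\int|\nabla\phi_n|^q\lesssim \|\rho_1-\rho_0\|_*^{q}$ -- and concludes by dominated convergence after dividing by $(n^{-1}\ln n)^{p/2}$.
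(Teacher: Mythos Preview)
Your overall architecture matches the paper's: regularize by the heat semigroup, establish a deterministic two-sided comparison of the form you call \eqref{eq:apriori} (this is exactly Theorem~\ref{ULB}), and split the expectation into a good event $\{\max_i\|\rho_{i,n}-1\|_\infty \le c_n/2\}$ and its complement. The reductions from $W_p^p(\mu^n,\nu^n)$ to $W_p^p(\rho_{0,n}\mm,\rho_{1,n}\mm)$ are also as you describe: for the upper bound the paper uses the improved dispersion estimate of Proposition~\ref{prop:dispersive} (this is where $\ln(nt_n)\ll\ln n$ enters), and for the lower bound simply the $W_p$-contractivity of $P_t$, which gives $W_p^p(\mu^n,\nu^n)\ge W_p^p(\rho_{0,n}\mm,\rho_{1,n}\mm)$ with no error term at all. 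On the bad event, the paper controls $\int|\nabla\phi_n|^q$ by the Sobolev bound \eqref{eq:ubdirq} combined with Proposition~\ref{prop_Lp}, just as you sketch. Your upper half of the deterministic comparison via a Benamou--Brenier/duality competitor is also essentially Proposition~\ref{prop:upper-bound}.

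The gap is in the mechanism for the \emph{lower} half of the deterministic comparison. The paper does not plug the true Kantorovich potential into \eqref{q-Poisson}; it plugs $\phi$ itself into the dual formulation \eqref{duality} of $W_p^p$, which after an integration by parts yields
\[
\frac{1}{p}W_p^p(\rho_0\mm,\rho_1\mm)\ \ge\ \int_\T|\nabla\phi|^q\de\mm-\frac{1}{q}\int_0^1\int_\T|\nabla Q_s\phi|^q\,\rho_1\de\mm\de s,
\]
so the whole question becomes: why is $\int_\T|\nabla Q_s\phi|^q\de\mm\le (1+o_c(1))\int_\T|\nabla\phi|^q\de\mm$ uniformly in $s\in[0,1]$? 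This is the genuinely new ingredient for $p\neq 2$, and it is \emph{not} obtained through $q$-Laplacian Calder\'on--Zygmund theory, higher integrability $\nabla\phi\in L^{q+\delta}$, or stability of $q$-harmonic functions --- none of those tools say anything about how the Hopf--Lax evolution acts on $\int|\nabla\cdot|^q$. Instead the paper shows (Proposition~\ref{Prop-viscos}, Lemma~\ref{fine_Lambda}) that the one-sided bound $-\Delta_q\phi+c\ge 0$, read in the \emph{viscosity} sense, is preserved by the Hopf--Lax semigroup $Q_s$; passing back to the distributional formulation via Theorem~\ref{thm_JUU} one then tests with $\eta=Q_t\phi-Q_s\phi\ge 0$ and obtains the Gronwall inequality $\frac{\de}{\de t}\int_\T|\nabla Q_t\phi|^q\de\mm\le c\int_\T|\nabla Q_t\phi|^q\de\mm$, hence \eqref{diff-int}. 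Your proposed route --- compare $\nabla\phi$ with the gradient of the true Kantorovich potential via uniform convexity of $|\cdot|^q$ --- runs into the obstacle that the Kantorovich potential is a priori only semiconcave, with no quantitative $H^{1,q}$ control, so the pairing you would need is not available without exactly the kind of propagation estimate the paper supplies.
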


For instance, a good choice of the intermediate regularization scale $t_n$ in the main result would be $t_n=n^{-1}(\ln n)^\beta$ with $\beta>1$. Thanks to this result, the existence of the limit
$$
\lim_{n\to\infty}\frac{\E{W_p^p(\mu^n,\nu^n)}}{\bigl((\ln n)/n\bigr)^{p/2}}
$$
is equivalent to the existence of the limit when, in the numerator, $\E{W_p^p(\mu^n,\nu^n)}$ is replaced by
$\E{\int_\T|\nabla\phi_n|^q\de\mm}$, with $\phi_n$ solutions to the PDE \eqref{q-Poisson} with a random right hand side \eqref{eq:regularizedrhoi}.
It would be interesting to prove or disprove the existence of the limit thanks to this reduction to a stochastic PDE.


In order to prove \cref{tmain}, the only probabilistic ingredients (see \cref{sec:heat}) will consist in checking that as $n \rightarrow \infty$, with high probability the densities $\rho_{i,n}$ in \eqref{eq:regularizedrhoi}, for $i=0,1$ are both sufficiently close to the constant density (\cref{prop_Bernstein}), as well as not too far from $\mu^n$ and $\nu^n$ in the Wasserstein sense (\cref{prop:dispersive}), collecting and slightly extending some results from \cite{AG} and \cite{AST}. Then, in \cref{UB} and \cref{LB} we will focus our efforts on showing the following deterministic result.

\begin{theorem}\label{ULB}
Let $p>1$, let $\phi$ be a solution of \eqref{q-Poisson}, and let $c:=2\max\limits_{i=0,1}\|\rho_i-1\|_{L^\infty(\T)}$. Then there exist $\underline{\delta}=\underline{\delta}(c,p)$ and $\overline{\delta}=\overline{\delta}(c,p)$ such that
$\underline{\delta}+\overline{\delta} \rightarrow 0$  as $c \rightarrow 0$ and
$$(1-\underline{\delta}) \int_\T |\nabla \phi|^q \de\mm \leq W_p^p(\rho_0 \mm, \rho_1 \mm) \leq (1+\overline{\delta}) \int_\T |\nabla \phi|^q \de\mm.$$
\end{theorem}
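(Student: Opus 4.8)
The plan is to bound $W_p^p(\rho_0\mm,\rho_1\mm)$ from above and below by $\int_\T|\nabla\phi|^q\de\mm$ up to multiplicative errors vanishing as $c\to0$, treating the two directions separately (as the paper announces, in the sections \cref{UB} and \cref{LB}).

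For the \emph{upper bound}, I would build an explicit competitor for the transport problem out of the solution $\phi$ of the $q$-Poisson equation \eqref{q-Poisson}. The natural first guess is the map $T(x)=x-|\nabla\phi(x)|^{q-2}\nabla\phi(x)$, but this does not exactly solve the Monge–Ampère equation — it only does so after linearization — so instead I would use the Benamou–Brenier / flow interpolation: set $\psi:=|\nabla\phi|^{q-2}\nabla\phi$ and consider the curve of measures $\mu_s$ transported by the vector field associated with $\phi$, interpolating between $\rho_0\mm$ and a measure close to $\rho_1\mm$, then correct the small mismatch. Concretely, since $-\div\psi=\rho_1-\rho_0$, the pair $(\mu_s,v_s)$ with $\mu_0=\rho_0\mm$ and $v_s$ suitably defined gives, via the dynamical formulation of $W_p$, a cost $\int_0^1\!\!\int_\T |v_s|^p\de\mu_s\,\de s$ whose leading term is exactly $\int_\T|\psi|^p\de\mm=\int_\T|\nabla\phi|^q\de\mm$; the corrections are controlled by $\|\rho_i-1\|_{L^\infty}$ using elliptic regularity for the $q$-Laplacian (the densities along the interpolation stay within $O(c)$ of $1$, so all error terms carry a factor that is $o(1)$ as $c\to0$). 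This yields the constant $\overline\delta=\overline\delta(c,p)\to0$.

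For the \emph{lower bound}, I would dualize. By Kantorovich duality, $W_p^p(\rho_0\mm,\rho_1\mm)\ge \int_\T u\,\de(\rho_1-\rho_0)\mm$ for any $c$-concave (w.r.t. cost $\mathbf d^p$) function $u$; I would test with $u=\lambda\phi$ for a suitable small constant $\lambda$, or more precisely with a $c$-transform regularization of $\phi$ itself. Using that $\int_\T\phi\,\de(\rho_1-\rho_0)\mm=\int_\T|\nabla\phi|^q\de\mm$ (take $\eta=\phi$ in \eqref{q-Poisson}), the main term is again $\int_\T|\nabla\phi|^q\de\mm$, and the price paid for replacing $\phi$ by an admissible Kantorovich potential — i.e. for the discrepancy between the linearized cost $|\nabla\phi|^q$ and the genuine cost of the induced displacement — is again an $O(c)$-relative error via the same quantitative elliptic estimates; one also needs $\|\nabla\phi\|_{L^\infty}$ small, which follows from $C^{1,\alpha}$ regularity for the $q$-Laplacian with right-hand side close to $0$, so that the $c$-concavity constraint is met. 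This gives $\underline\delta=\underline\delta(c,p)\to0$.

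The main obstacle is the nonlinearity $|\nabla\phi|^{q-2}\nabla\phi$ when $p\ne2$: unlike the quadratic case, there is no clean Hilbertian linearization, so both the construction of the competitor in the upper bound and the admissibility of the test potential in the lower bound require genuine regularity theory for the $q$-Laplacian — interior $C^{1,\alpha}$ and $W^{2,2}$-type estimates — with constants that degenerate gracefully as the right-hand side $\rho_1-\rho_0\to0$ in $L^\infty$. Quantifying how these regularity constants propagate into $\underline\delta,\overline\delta$, and in particular checking that the displacement $|\nabla\phi|^{q-2}\nabla\phi$ has small Lipschitz-type norm so that the induced transport is $p$-cyclically monotone up to the stated error, is the technical heart of the argument; everything else is a careful but routine bookkeeping of lower-order terms.
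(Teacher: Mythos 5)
Your upper bound is essentially a primal (Benamou--Brenier) rendering of the argument the paper runs on the dual side: with $\rho_s=(1-s)\rho_0+s\rho_1$ and $v_s=\rho_s^{-1}|\nabla\phi|^{q-2}\nabla\phi$ the continuity equation holds exactly (since $\partial_s\rho_s=\rho_1-\rho_0=-\div(|\nabla\phi|^{q-2}\nabla\phi)$), and the dynamical cost is $\int_\T\bigl(\int_0^1\rho_s^{1-p}\de s\bigr)|\nabla\phi|^q\de\mm$, which is exactly the factor $M_q(\rho_0,\rho_1)\le(1-c/2)^{-1/(q-1)}$ the paper obtains via duality and the Hopf--Lax semigroup (Proposition~2.3 of \cite{AST}). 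Two remarks: if you use the affine interpolation there is \emph{no} mismatch to correct and no elliptic regularity is needed (only $\rho_s\ge 1-c/2$ pointwise); if instead you literally push $\rho_0\mm$ forward along the flow of the vector field, the terminal mismatch and the density bounds along the flow require second-order control on $\phi$, which is delicate for the $q$-Laplacian and best avoided. So this half is fine, just take the affine interpolation.

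The lower bound is where the proposal has a genuine gap. First, a structural point: in the duality formula \eqref{duality} the pair $(-f,Q_1f)$ is admissible for \emph{every} Lipschitz $f$, so there is no $c$-concavity constraint to meet and no need for $\|\nabla\phi\|_{L^\infty}$ to be small; the paper simply tests with $f=\phi$. The actual difficulty, which your proposal does not engage with, is the following. Testing with $\phi$ gives
\begin{equation*}
\frac 1p W_p^p(\rho_0\mm,\rho_1\mm)\;\ge\;\int_\T|\nabla\phi|^q\de\mm\;-\;\frac1q\int_0^1\!\!\int_\T|\nabla Q_s\phi|^q\,\rho_1\de\mm\de s,
\end{equation*}
using $\eta=\phi$ in \eqref{q-Poisson} for the first term and the Hamilton--Jacobi equation for the second. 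Since $1-\tfrac1q=\tfrac1p$, the whole theorem reduces to the \emph{relative} estimate $\int_\T|\nabla Q_s\phi|^q\de\mm\le(1+o(1))\int_\T|\nabla\phi|^q\de\mm$ as $c\to0$, uniformly in $s\in[0,1]$. Your proposed mechanism --- $C^{1,\alpha}$ regularity for the $q$-Laplacian making $\|\nabla\phi\|_{L^\infty}$ small --- only yields that \emph{both} sides are small in absolute terms; it cannot produce a multiplicative error, which is what is needed since the statement is an asymptotic equality of two quantities that both tend to zero. Likewise, rescaling the test function to $\lambda\phi$ does not help: the dual value behaves like $\lambda-\lambda^q/q$ times the energy, which is maximized precisely at $\lambda=1$, so one cannot trade admissibility for a smaller correction term.

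The missing ingredient is the propagation of the one-sided bound $-\Delta_q\phi+c\ge0$ (which holds because $-\Delta_q\phi=\rho_1-\rho_0\le c$) under the Hopf--Lax semigroup. The paper proves this in the viscosity sense (\cref{Prop-viscos}), transfers it back to the distributional sense via the equivalence \cref{thm_JUU}, and deduces in \cref{fine_Lambda} that $\Lambda(t)=\int_\T|\nabla Q_t\phi|^q\de\mm$ satisfies $\Lambda'(t)\le c\,\Lambda(t)$, hence $\Lambda(t)\le e^{ct}\Lambda(0)$. This differential inequality is exactly the relative error bound required above, and it is the technical heart of the lower bound; nothing in your outline substitutes for it.
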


This result actually holds, with the same proof, on any $d$-dimensional torus. 
The extension to the setting of compact Riemannian manifolds
(along the lines of \cite{AST}) possibly with boundary is beyond the scope of this note, and requires in particular the understanding in that more general setting of the stability of the estimates from above for the Riemannian analogous of the operator ${\rm div}(|\nabla\phi|^{q-1}\nabla\phi)$ under the action of the Hopf-Lax semigroup, even after shocks.

\smallskip
{\bf Acknowledgements.} The authors thank N. Gigli for having pointed out to them  \cite[Section~4.3]{Ohta} where, on Riemannian manifolds, via the theory of characteristics,  the preservation of upper bounds on the $q$-Laplacian under the action of the $p$-Hopf-Lax
semigroup is shown, even in the nonlinear case $p\neq 2$, before shocks. Eventually, in the case of the flat space $\T$,
the proof we gave in \cref{Prop-viscos} does not use this computation and works even beyond shocks, using solutions in the viscosity sense.

L.A. and F.V. acknowledge the PRIN Italian grant 202244A7YL ``Gradient Flows and Non-Smooth Geometric Structures with
Applications to Optimization and Machine Learning''. D.T.\ acknowledges the MUR Excellence Department Project awarded to the Department of Mathematics, University of Pisa, CUP I57G22000700001,  the HPC Italian National Centre for HPC, Big Data and Quantum Computing - Proposal code CN1 CN00000013, CUP I53C22000690001, the PRIN Italian grant 2022WHZ5XH - ``understanding the LEarning process of QUantum Neural networks (LeQun)'', CUP J53D23003890006, the INdAM-GNAMPA project 2023 ``Teoremi Limite per Dinamiche di Discesa Gradiente Stocastica: Convergenza e Generalizzazione'', INdAM-GNAMPA project 2024 ``Tecniche analitiche e probabilistiche in informazione quantistica'' and the project  G24-202 ``Variational methods for geometric and optimal matching problems'' funded by Università Italo Francese.  Research also partly funded by PNRR - M4C2 - Investimento 1.3, Partenariato Esteso PE00000013 - "FAIR - Future Artificial Intelligence Research" - Spoke 1 "Human-centered AI", funded by the European Commission under the NextGeneration EU programme.

\section{Preliminaries}\label{sec:notation}

\subsection{The Wasserstein distance}
Given probability measures $\mu$, $\nu$  on $\T$ and $p \ge 1$ we define the $p-$Wasserstein distance  between $\mu$ and $\nu$ as
$$ W_{p}(\mu,\nu):=\left.\min\left\{ \left(\int_{\T\times\T} \mathbf{d}(x,y)^p d\pi(x,y) \right)^{1/p} \right|\pi_1=\mu,\pi_2=\nu \right\}.$$
We refer to \cite{OT} for an introduction to the subject. In particular, we will use throughout that $W_p$ enjoys the triangle inequality. Moreover, we recall here for later use the following consequence of the Benamou-Brenier formula, see e.g. \cite{peyre2018comparison}, \cite[Theorem 2]{Le17} or \cite[Lemma 3.4]{goldman2021convergence}.

\begin{proposition}\label{BBcomodo} Let $\mu = \rho_0 \mm$, $\nu=\rho_1 \mm$ be absolutely continuous with respect to $\mm$ and let $\phi$ be a solution to \eqref{q-Poisson} with $q=2$. Then, for every $p \ge 1$ there exists a constant $C = C(\T, p)< \infty$ such that
\begin{equation}\label{eq:peyre}
 W_p^p(\mu,\nu)\le C (\operatorname{ess-inf} \rho_1 )^{1-p} \int_{\T}|\nabla\phi|^p d \mm.
\end{equation}
\end{proposition}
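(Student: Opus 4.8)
The plan is to bound $W_p^p(\mu,\nu)$ from above by exhibiting an explicit competitor in the Benamou--Brenier representation, built from the solution $\phi$ of the linear Poisson equation $-\lapl\phi=\rho_1-\rho_0$ (i.e.\ \eqref{q-Poisson} with $q=2$), and to read off the estimate from the finiteness of an elementary one-dimensional integral. Recall that for $p>1$
$$
W_p^p(\mu,\nu)=\min\Bigl\{\int_0^1\!\!\int_\T|v_t|^p\de\rho_t\de t\st \partial_t\rho_t+\div(\rho_t v_t)=0,\ \rho_0=\mu,\ \rho_1=\nu\Bigr\}
$$
(with $|v_t|$ in place of $|v_t|^p$ when $p=1$); in particular \emph{any} admissible pair $(\rho_t,v_t)$ yields an upper bound for $W_p^p$, which is exactly the easy half of the content of the references cited in the statement.

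First I would fix a smooth nondecreasing $h\colon[0,1]\to[0,1]$ with $h(0)=0$ and $h(1)=1$, and set
$$
\rho_t:=(1-h(t))\rho_0+h(t)\rho_1,\qquad v_t:=\frac{h'(t)\,\grad\phi}{\rho_t}.
$$
As an affine combination of $\rho_0,\rho_1$, each $\rho_t$ is a probability density and $t\mapsto\rho_t$ is weakly continuous with the correct endpoints; testing against $\psi\in C^\infty(\T)$ and using $\int_\T\grad\phi\cdot\grad\psi\de\mm=\int_\T(\rho_1-\rho_0)\psi\de\mm$ gives
$$
\frac{d}{dt}\int_\T\psi\,\rho_t\de\mm=h'(t)\int_\T(\rho_1-\rho_0)\psi\de\mm=\int_\T\grad\psi\cdot\bigl(h'(t)\grad\phi\bigr)\de\mm=\int_\T\grad\psi\cdot(\rho_t v_t)\de\mm,
$$
i.e.\ the continuity equation holds in the sense of distributions.

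Then I would estimate the action. Since $\rho_t\ge h(t)\rho_1\ge h(t)\,\operatorname{ess-inf}\rho_1$ a.e.\ and $p-1\ge0$,
$$
\int_0^1\!\!\int_\T|v_t|^p\rho_t\de\mm\de t=\int_0^1 h'(t)^p\Bigl(\int_\T\frac{|\grad\phi|^p}{\rho_t^{p-1}}\de\mm\Bigr)\de t\le(\operatorname{ess-inf}\rho_1)^{1-p}\Bigl(\int_0^1\frac{h'(t)^p}{h(t)^{p-1}}\de t\Bigr)\int_\T|\grad\phi|^p\de\mm.
$$
Choosing $h(t)=t^p$ makes the $t$-integrand identically $p^p$, so the bracket equals $p^p$ (and the substitution $h=u^p$ turns the bracket into $p^p\int_0^1 u'(t)^p\de t\ge p^p$, by Jensen, so this choice is optimal); for $p=1$ the bracket is $1$. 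Combining with the Benamou--Brenier upper bound yields $W_p^p(\mu,\nu)\le p^p\,(\operatorname{ess-inf}\rho_1)^{1-p}\int_\T|\grad\phi|^p\de\mm$, which is the claim, with $C(\T,p)=p^p$ (no dependence on $\T$ is actually needed). Here $\int_\T|\grad\phi|^p\de\mm<\infty$ since $\rho_1-\rho_0\in L^\infty(\T)$ forces $\phi\in W^{2,r}(\T)$ for every $r<\infty$ by elliptic regularity, hence $\grad\phi\in L^\infty$; and anyway there is nothing to prove when the right-hand side is infinite.

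The one point requiring care is the rigorous justification of the Benamou--Brenier upper bound for the possibly non-smooth velocity field $v_t=h'(t)\grad\phi/\rho_t$. I would handle this either by invoking directly one of \cite{peyre2018comparison}, \cite[Theorem 2]{Le17}, \cite[Lemma 3.4]{goldman2021convergence}, which state essentially this inequality, or by the standard approximation argument: mollify $\rho_0,\rho_1$ and $\phi$ so that the associated flow is classical and produces a genuine transport plan, estimate its cost by the action bound above (which is uniform in the mollification parameter), and pass to the limit using lower semicontinuity of $W_p$ under weak convergence. Everything else is elementary, and the constant is not optimized.
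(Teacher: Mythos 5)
Your proof is correct and is essentially the argument behind the cited references: the paper itself states \cref{BBcomodo} without proof, deferring to \cite{peyre2018comparison}, \cite{Le17} and \cite{goldman2021convergence}, and what you wrote is precisely the standard Benamou--Brenier competitor $\rho_t=(1-h(t))\rho_0+h(t)\rho_1$, $v_t=h'(t)\nabla\phi/\rho_t$, with the lower bound $\rho_t\ge h(t)\operatorname{ess-inf}\rho_1$ and the choice $h(t)=t^p$ making the time integral equal to $p^p$. The continuity-equation check, the optimality remark via Jensen, and the reduction of the rigorous upper bound to the superposition/approximation argument are all in order.

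One small inaccuracy: you assert $\rho_1-\rho_0\in L^\infty(\T)$ to get $\nabla\phi\in L^\infty$, but the hypotheses of \cref{BBcomodo} only ask that $\mu,\nu\ll\mm$, so the densities need not be bounded. This is harmless, since (as you note) the inequality is vacuous when $\int_\T|\nabla\phi|^p\de\mm=+\infty$ or when $\operatorname{ess-inf}\rho_1=0$, but the elliptic-regularity sentence should be dropped or conditioned accordingly.
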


We notice that the bound above is asymmetric in the roles of $\mu$ and $\nu$, since only $\rho_1$ is required to be (essentially) bounded from below. In some sense, our work aims to sharpen \eqref{eq:peyre} by replacing the linear Poisson equation with the non-linear $q$-Poisson one, and indeed \cref{prop:upper-bound} below is proved using a similar argument. However, \eqref{eq:peyre} is useful as one can combine it with harmonic analysis tools, as done e.g.\ in \cite{AST, Le17}. For example, for any $p>1$, by the classical boundedness of the Riesz transform operator $\nabla (-\Delta)^{-1/2}$ on $\T$, where $(-\Delta)^{-1/2}$ is defined as a Fourier multiplier, one can further bound from above
 $$ \int_{\T}|\nabla\phi|^p d \mm \le C \int_{\T} |(-\Delta)^{-1/2}  (\rho_1-\rho_0)|^p d \mm,$$
where $C = C(\T, p)<\infty$. Hence, from \eqref{eq:peyre} we further deduce the upper bound
\begin{equation}\label{eq:peyre-riesz}
  W_p^p(\mu,\nu)\le C (\operatorname{ess-inf} \rho_1)^{1-p} \int_{\T} |(-\Delta)^{-1/2} (\rho_1-\rho_0)|^p d \mm,
\end{equation}
where again $C = C(\T, p)< \infty$.
%
%

\subsection{Viscosity solutions} Viscosity solutions are designed to give a suitable notion of solution (with good properties such us uniqueness, stability and comparison principles) for general non-linear equations for which the distributional point of view does not make sense, as fully nonlinear PDE's. However, this notion reveals to be useful also for PDE's having a distributional formulation. This is the case of the $q$-Laplace (also called $q$-Poisson) equation considered in this paper, associated to the
differential operator 
$$
 -\Delta_q u:=-{\rm div}\bigl(|\nabla u|^{q-2}\nabla u).
 $$
Actually, we will just deal with supersolutions.

\begin{definition}\label{DefV}
Let $g:\T\to\R$. We say that a function $f: \T \rightarrow (-\infty, +\infty]$ is a \textbf{viscosity supersolution} for the equation $-\Delta_q u+g=0$, and we write
\begin{equation}\label{pP} -\Delta_q u+g\geq 0\quad\text{in the viscosity sense}\end{equation}
if the following conditions hold:
\begin{enumerate}[label=(\roman*)]
\item $f$ is lower semicontinuous, $f \not\equiv +\infty$, and
\item \label{visc-def} whenever $x_0\in\T$ and $\varphi\in C^2(\T)$ are such that $f-\varphi$ has a local minimum at $x_0$ and $\nabla \varphi(x_0) \neq 0$, we have
$$-\Delta_q \varphi(x_0)+g(x_0) \geq 0.$$
\end{enumerate}
\end{definition}

\cref{DefV} is adapted to the special form of the $q$-Laplace PDE. Indeed, the additional requirement $\nabla \varphi(x_0) \neq 0$
(not present in the general theory of viscosity solutions, see for instance \cite{CIL}) is due to the fact that the expression
\begin{equation}\label{expansion p-l}\Delta_q \varphi=|\nabla \varphi|^{q-4} \left[ |\nabla \varphi|^2 \Delta \varphi+ 
(q-2) \sum_{i,\,j=1}^n \frac{\partial \varphi}{\partial x_i}\frac{\partial \varphi}{\partial x_j}\frac{\partial^2 \varphi}{\partial x_i \partial x_j}\right]\end{equation}
is singular at the critical points of $\varphi$, when $1<q<2$.

\begin{remark}
With this convention, any $f\in C^2(\T)$ satisfying  $-\Delta_q f+g\geq 0$ in the pointwise sense is also a viscosity supersolution. This follows from the fact that if we call 
$$F_q(v, S):(\R^2\setminus\{0\})\times \mbox{Sym}^{2 \times 2}(\R)\to\R$$ 
the differential operator such that $F_q(\nabla u, \nabla^2 u)=-\Delta_q u$, then $F$ is non-increasing with respect to $S$ (just look at (\ref{expansion p-l})). It follows that if 
$f-\varphi$ has a local minimum at $x_0$ with $\nabla \varphi(x_0)\neq 0$, then $\nabla f(x_0)=\nabla \varphi(x_0)\neq 0$ and 
$$F_q(\nabla \varphi(x_0), \nabla^2 \varphi(x_0))+g(x_0)\geq F_q(\nabla f(x_0), \nabla^2 f(x_0))+g(x_0)\geq 0$$
as $\nabla^2 f(x_0)\geq\nabla^2\varphi(x_0)$.
\end{remark}

\subsection{Hopf-Lax semigroup}\label{UB} Given $f:\T\to\R$ lower semicontinuous, let $u=Q_tf$ be the \textit{Hopf-Lax semigroup} associated to the Hamilton-Jacobi equation
\begin{equation}\label{HJ}
\partial_t u + \frac{|\nabla u|^q}q=0, 
\end{equation}
that is
\begin{equation}\label{eq:defHL}
(Q_tf)(x)=\min_{y \in \T} \left\{f(y)+ \frac{{\mathbf d}^p(x,y)}{pt^{p-1}} \right\}.
\end{equation}
The following properties of the semigroup $Q_tf$, with $Q_0f=f$, are well-known, see for instance Proposition 3.3 in \cite{AGS} for a detailed proof.

\begin{proposition}\label{prop_HL} Let $f:\T\to\R$ be Lipschitz. Then the functions $Q_tf$ are Lipschitz, uniformly with respect to
$t\in [0,1]$, $t\mapsto Q_tf$ is Lipschitz from $[0,1]$ to $C(\T)$ and the PDE \eqref{HJ} is satisfied almost everywhere in $(0,1)\times\T$. 
\end{proposition}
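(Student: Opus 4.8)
The plan is to establish, in order: (a) a localization estimate for the minimizers in \eqref{eq:defHL}; (b) the spatial Lipschitz bound, uniform in $t$; (c) the semigroup (dynamic programming) identity $Q_{t+s}f=Q_t(Q_sf)$; (d) the time-Lipschitz estimate; and finally (e) the Hamilton--Jacobi equation at a.e.\ point via Rademacher's theorem and a one-sided comparison argument. First, since $f$ is continuous and $\T$ compact, the minimum in \eqref{eq:defHL} is attained at some $y_{t,x}$; comparing its value with the value at $y=x$ gives
$$\frac{{\mathbf d}^p(x,y_{t,x})}{pt^{p-1}}\le f(x)-f(y_{t,x})\le \mathrm{Lip}(f)\,{\mathbf d}(x,y_{t,x}),$$
hence ${\mathbf d}(x,y_{t,x})\le \bigl(p\,\mathrm{Lip}(f)\bigr)^{q-1}\,t$: minimizers are $O(t)$-close to the base point, with a constant independent of $t\in[0,1]$. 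Using this, for $x$ and any $x'$ with ${\mathbf d}(x,x')\le t$ both $Q_tf(x)$ and $Q_tf(x')$ are infima of the functions $y\mapsto f(y)+{\mathbf d}^p(\emptyparam,y)/(pt^{p-1})$ with $y$ restricted to a common ball $B(x,Ct)$, on which each such function has (in the space variable) Lipschitz constant $\le (Ct)^{p-1}/t^{p-1}=C^{p-1}$; since an infimum of $L$-Lipschitz functions is $L$-Lipschitz, $Q_tf$ is locally Lipschitz with a constant $L'=L'(\mathrm{Lip}(f),p)$ independent of $t$, hence globally Lipschitz on the compact connected $\T$ by chaining along geodesics.

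Next I would invoke the semigroup property $Q_{t+s}f=Q_t(Q_sf)$, which holds since $\T$ is a geodesic space (it follows by optimally splitting the kernel ${\mathbf d}^p/(p(t+s)^{p-1})$ along geodesics, or may be quoted from \cite{AGS}). Taking the competitor $y=x$ in $Q_{t+s}f(x)=Q_s(Q_tf)(x)$ shows $t\mapsto Q_tf(x)$ is non-increasing; conversely, bounding $Q_tf(y)\ge Q_tf(x)-L'{\mathbf d}(x,y)$ and optimizing the scalar expression $L'r-r^p/(ps^{p-1})$ over $r\ge 0$ (whose maximum is $(L')^q s/q$) yields
$$0\ge Q_{t+s}f(x)-Q_tf(x)\ge -\frac{(L')^q}{q}\,s,$$
and the same argument with $Q_0f=f$ and $\mathrm{Lip}(f)$ in place of $L'$ gives the continuity (indeed Lipschitz bound) at $t=0$. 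Hence $t\mapsto Q_tf$ is Lipschitz from $[0,1]$ to $C(\T)$, and combined with (b) the function $u(t,x):=Q_tf(x)$ is Lipschitz on $[0,1]\times\T$, so differentiable $\Leb^3$-a.e.\ on $(0,1)\times\T$ by Rademacher.

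Finally, fix a differentiability point $(t_0,x_0)$ of $u$. For the inequality $\partial_t u+|\nabla u|^q/q\le 0$: from $u(t_0+h,\cdot)=Q_h(u(t_0,\cdot))$ with competitor $y=x_0+hv$ one gets $u(t_0+h,x_0)\le u(t_0,x_0+hv)+h|v|^p/p$, so dividing by $h$ and letting $h\to0^+$ gives $\partial_t u(t_0,x_0)\le\nabla u(t_0,x_0)\cdot v+|v|^p/p$ for every $v$, and minimizing via the Legendre duality $\inf_v\{\xi\cdot v+|v|^p/p\}=-|\xi|^q/q$ yields $\partial_t u(t_0,x_0)\le-|\nabla u(t_0,x_0)|^q/q$. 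For the reverse inequality, for $h$ small use $u(t_0,\cdot)=Q_h(u(t_0-h,\cdot))$ and pick a minimizer $y_h$, so $u(t_0,x_0)=u(t_0-h,y_h)+{\mathbf d}^p(x_0,y_h)/(ph^{p-1})$; the localization estimate gives ${\mathbf d}(x_0,y_h)\le Ch$, so writing $y_h=x_0-hw_h$ with $|w_h|\le C$ and expanding $u$ at $(t_0,x_0)$ produces $-h\,\partial_t u+\nabla u\cdot(-hw_h)+o(h)=-h|w_h|^p/p$, i.e.\ $\partial_t u=-\nabla u\cdot w_h+|w_h|^p/p+o(1)\ge -|\nabla u|^q/q+o(1)$; letting $h\to0^+$ closes the two-sided inequality and proves the PDE.

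I expect the main obstacle to be the bookkeeping in step (e): keeping the dynamic programming identity pointed at admissible times so that $t_0\pm h\in(0,1)$, and, crucially, using the $O(t)$-localization of minimizers to justify that $(t_0-h,y_h)\to(t_0,x_0)$ along a direction of increment of size $O(h)$ so that the first-order expansion of $u$ is legitimate. The uniform-in-$t$ spatial Lipschitz bound in step (b) is also a genuine point, precisely because the kernels ${\mathbf d}^p/(pt^{p-1})$ are not equi-Lipschitz as $t\to 0$ and one must localize the infimum to a ball of radius $O(t)$ before exploiting the Lipschitz-stability of infima.
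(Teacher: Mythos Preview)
The paper does not supply its own proof of this proposition; it simply records that the result is well-known and refers to \cite[Proposition~3.3]{AGS}. Your proposal, by contrast, gives a complete self-contained argument, and it is correct. The localization ${\mathbf d}(x,y_{t,x})\le (p\,\mathrm{Lip}(f))^{q-1}t$, the dynamic-programming identity on the geodesic space $\T$, the time-Lipschitz bound via $\sup_{r\ge 0}\{L'r-r^p/(ps^{p-1})\}=(L')^q s/q$, and the two-sided derivation of \eqref{HJ} at Rademacher points are all sound; in particular, in step~(e) the uniform bound $|w_h|\le C$ ensures that the increment $(-h,-hw_h)$ tends to $0$ with modulus $O(h)$, so Fr\'echet differentiability of $u$ at $(t_0,x_0)$ legitimately yields the first-order expansion with $o(h)$ remainder.

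Two minor points of bookkeeping. In (b), if ${\mathbf d}(x,x')\le t$ then minimizers for both $x$ and $x'$ lie in $B(x,(C+1)t)$, not $B(x,Ct)$, and along the geodesic from $x$ to $x'$ one has ${\mathbf d}(\emptyparam,y)\le (C+2)t$, so the resulting spatial Lipschitz constant is $(C+2)^{p-1}$ rather than $C^{p-1}$; this is harmless. A shorter route, specific to the flat torus, is to use the translated competitor $y'=y_x+(x'-x)$, which preserves the transport term exactly and gives $\mathrm{Lip}(Q_tf)\le\mathrm{Lip}(f)$ directly, with no localization needed.
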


\subsection{Heat kernel on $\T$}\label{sec:heat}

We recall that the heat kernel on the torus $\T=\R^2/\Z^2$ is given by
\begin{equation}\label{def p_t}
p_t(x):=\sum_{\mathbf{n} \in \Z^2} \overline{p}_t(x+\mathbf{n}),
\end{equation}
where $\overline{p}_t(x)=\frac1{4\pi t} e^{-\frac{|x|^2}{4t}}$, $x\in\R^2$, is the Euclidean heat kernel.
Given a probability measure $\mu$ in $\T$, we denote by $P_t\mu\ll\mm$ the probability measure having density
$$
\rho(x)=\int_\T p_t(x-y)\de\mu(y).
$$
Let us recall that $(P_t)_{t \ge 0}$ defines a symmetric Markov (convolution) semigroup, $P_{s+t} = P_s \circ P_t$ with (unique) invariant measures $\mm$ and generator given by the (distributional) Laplacian.  Let  us recall the following deterministic dispersion bound, directly coming from the coupling
$$
\Sigma=\int_\T p_t(z)\Sigma_z\de\mm(z)\qquad\text{with}\qquad\Sigma_z=(Id\times\tau_z)_\#\mu
$$
between $\mu$ and $P_t\mu$ (where $\tau_z$ is the shift map):
\begin{equation}\label{eq:dispersion}
W_p(\mu,P_t\mu)\leq C_0\sqrt{t}
\qquad\forall t>0,
\end{equation}
with $C_0=C_0(\T)=\bigl(\int_{\T}|z|^pp_1(z)\de\mm(z)\bigr)^{1/p}$ for any probability measure $\mu$ in $\T$.

A remarkable fact, first noticed in \cite[Theorem~5.2]{AG}, is that the dispersion bound  above can be significantly improved (in average) when applied to empirical measures $\mu^n$ as in \eqref{eq:empirical}.

%
%
%
\begin{proposition} \label{prop:dispersive} For every  $p \ge 1$ there exists positive constant $C_1(\T,p),\,C_2(\T,p)$ such that the following holds. If 
$t=\alpha/n\leq \frac 12$ with $\alpha\geq C_1(\T,p)\ln n$, then
$$
\E{W_p^p(\mu^n,P_t\mu^n)}\leq C_2(\T,p)\left( \frac{\ln\alpha}{n} \right)^{p/2}.
$$
\end{proposition}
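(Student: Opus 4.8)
The plan is to reduce the statement to a bound that holds with high probability, then integrate, carefully tracking the (rare) event where the heat-kernel regularization behaves badly. First I would fix $t=\alpha/n$ with $\alpha\ge C_1\ln n$, $t\le 1/2$, and recall the deterministic inclusion $W_p(\mu^n,P_t\mu^n)\le C_0\sqrt t$ from \eqref{eq:dispersion}, which already gives a crude bound of order $(\alpha/n)^{p/2}$. This is off by the improvement factor $(\ln\alpha)/\alpha$, so the point is to show that the worst case is atypical. Following \cite[Theorem~5.2]{AG}, I would split according to how many of the $n$ sample points $X_i$ land in a ball of radius $r\sim\sqrt t$ around a given point: writing $\rho_{0,n}$ for the density of $P_t\mu^n$, the measure $P_t\mu^n$ is built by spreading each Dirac over scale $\sqrt t$, so the ``local mass at scale $\sqrt t$'' governs the transport cost. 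The key observation is that for a Poissonized (or just binomial) collection of $n$ uniform points on $\T$, the number falling in a fixed region of area $t$ concentrates around $nt=\alpha$, and the large-deviation tail is exponentially small in $\alpha$; since $\alpha\ge C_1\ln n$, choosing $C_1$ large makes this tail beat any polynomial in $n$.

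Concretely, I would cover $\T$ by $O(1/t)$ disjoint squares $Q_j$ of side $\sqrt t$, let $N_j=\#\{i: X_i\in Q_j\}$, so $\E{N_j}=nt=\alpha$, and introduce the good event $G=\{\max_j N_j\le 2\alpha\}$. By a Chernoff bound for the binomial, $\P{N_j>2\alpha}\le e^{-c\alpha}$, and a union bound over the $O(n/\alpha)$ squares gives $\P{G^c}\le (n/\alpha)e^{-c\alpha}\le n^{-10}$, say, once $C_1$ is large enough. On $G$ I would construct an explicit (sub)optimal coupling between $\mu^n$ and $P_t\mu^n$: transport the mass $P_t\mu^n$ restricted to (a neighborhood of) $Q_j$ back to the $N_j$ Dirac masses in $Q_j$, paying at most $\diam$ of a $O(\sqrt t)$-neighborhood, i.e.\ cost $O(\sqrt t)$ per unit mass times $N_j/n\le 2\alpha/n = 2t$ mass, summed over $j$. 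A more careful accounting — using that $P_t$ has Gaussian tails so only an exponentially small fraction of each Dirac's mass escapes scale $\sqrt{t\ln\alpha}$ — upgrades the per-point transport distance from $\sqrt t$ to $\sqrt{t\ln\alpha}$, which is exactly where the $\ln\alpha$ in the statement comes from: $W_p^p(\mu^n,P_t\mu^n)\mathbf 1_G\le C(t\ln\alpha)^{p/2}=C((\ln\alpha)/n)^{p/2}$ after using $\sum_j N_j/n=1$. Then
$$\E{W_p^p(\mu^n,P_t\mu^n)}=\E{W_p^p\mathbf 1_G}+\E{W_p^p\mathbf 1_{G^c}}\le C\Bigl(\tfrac{\ln\alpha}{n}\Bigr)^{p/2}+ (C_0^2 t)^{p/2}\,\P{G^c},$$
and since $t\le 1/2$ and $\P{G^c}\le n^{-10}$, the second term is $\le C n^{-10}\ll ((\ln\alpha)/n)^{p/2}$, giving the claim with a possibly enlarged $C_2$.

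The main obstacle is the second step of the coupling construction: getting the sharp $\sqrt{t\ln\alpha}$ scale rather than the trivial $\sqrt t$. One must show that the heat-kernel mass that a single point $X_i$ deposits outside a ball of radius $R\sqrt t$ is at most $e^{-cR^2}$, then choose $R\sim\sqrt{\ln\alpha}$ so that this escaping mass, when transported over the full diameter $O(1)$ of $\T$, contributes $\lesssim e^{-c\ln\alpha}\cdot 1 = \alpha^{-c}$ per point — negligible compared to the main term — while the bulk mass travels only $R\sqrt t=\sqrt{t\ln\alpha}$. Organizing this as a single competitor coupling (combining the "local" reshuffling inside each enlarged square with the "global" handling of Gaussian tails, on the event $G$) and checking that the cost telescopes correctly against $\sum_j N_j=n$ is the delicate bookkeeping; the probabilistic input (Chernoff for the binomial, union bound) and the final integration are routine. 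I would also note that, as in \cite{AG, AST}, one may Poissonize to make the $N_j$ independent if that simplifies the deviation estimate, at the cost of a standard de-Poissonization step, but it is not essential here since binomial Chernoff bounds suffice.
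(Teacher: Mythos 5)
There is a genuine gap, and it sits exactly where you locate the ``delicate bookkeeping.'' First, an arithmetic problem: since $t=\alpha/n$, your claimed identity $(t\ln\alpha)^{p/2}=((\ln\alpha)/n)^{p/2}$ is false; what your construction would actually yield is $(\alpha\ln\alpha/n)^{p/2}$, which is \emph{worse} than the trivial dispersion bound $W_p^p(\mu^n,P_t\mu^n)\le (C_0\sqrt t)^p= C(\alpha/n)^{p/2}$ from \eqref{eq:dispersion}, let alone the target $((\ln\alpha)/n)^{p/2}$. Second, and more fundamentally, any coupling that returns each Dirac's smeared mass to (a neighborhood of) that Dirac is doomed: the heat kernel at time $t$ carries a fixed positive fraction of its mass at distance of order $\sqrt t$ from its center, so such a coupling costs at least $c\,t^{p/2}$ per point, and truncating Gaussian tails at radius $\sqrt{t\ln\alpha}$ only \emph{increases} the relevant distance. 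The factor $\ln\alpha$ in the statement is not a tail-truncation scale; it comes from cancellations between the smeared masses of \emph{different} points against the Diracs of \emph{other} points, i.e.\ from the smallness of the fluctuation $\rho_{1/n,n}-\rho_{t,n}$ in a negative Sobolev norm. A single-scale, per-square reshuffling cannot see these cancellations, and the Chernoff/union-bound control of $\max_j N_j$, while correct, does not address the main difficulty. (A local argument could in principle work, but it would have to match the points in each square to the locally \emph{uniform} measure and invoke an AKT-type estimate at the local scale --- $\alpha$ points in a box of side $\sqrt t$ cost $\sqrt{t\ln\alpha/\alpha}=\sqrt{\ln\alpha/n}$ each --- plus a separate transport to fix the mass defects $N_j/n-t$ between squares; none of this is in your write-up, and the local AKT bound itself requires the multiscale machinery you are trying to avoid.)

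The paper's proof is entirely different. For $1\le p\le 2$ it simply invokes \cite[Theorem~5.2]{AG} and H\"older. For $p\ge 2$ it pays $C_0n^{-p/2}$ to replace $\mu^n$ by $P_{1/n}\mu^n$, restricts to the high-probability event $\{\|\rho_{t,n}-1\|_\infty\le 1/2\}$ via \cref{prop_Bernstein}, applies the Benamou--Brenier/Riesz bound \eqref{eq:peyre-riesz} to reduce the problem to estimating $\int_\T\E{\vert n^{-1}\sum_{i}\varphi_i(x)\vert^p}\de\mm(x)$ with $\varphi_i(x)=[(-\Delta)^{-1/2}(p_{1/n}-p_t)](X_i-x)$ independent and centered, and concludes with Rosenthal's inequality together with the key variance computation $\E{\vert\varphi(x)\vert^2}=O(\ln\alpha)$, where $\ln\alpha$ appears as the logarithm of the ratio of the two regularization scales $t$ and $1/n$. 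That $L^2$ computation is precisely the quantitative expression of the cancellation your coupling misses.
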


\begin{proof}
The case $p=2$ is established in \cite[Theorem~5.2]{AG}, and by the H\"older inequality, it entails the thesis for every $1 \le p <2$:
\begin{equation}\label{eq:ag_p}
\E{W_p^p(\mu^n,P_t\mu^n)}\leq (C_2)^{p/2}\frac{(\ln\alpha)^{p/2}}{n^{p/2}},
\quad t=\frac{\alpha}{n},\,\,\alpha\geq C_1\ln n.
\end{equation}
Therefore, it is sufficient to consider the case $p \ge 2$. To this aim, we combine the argument from \cite{AG} with the application of Rosenthal's inequality, from \cite{Le17}, where upper bounds for the random bipartite matching cost are proved for any $p \ge 2$. By the triangle inequality and the elementary bound
\begin{equation}\label{eq:trivial-p}
 |x+y|^p \le 2^{p-1} ( |x|^p+ |y|^p)
\end{equation}
for some $C = C(p)<\infty$, we find
\begin{equation}\begin{split}
 \E{W_p^p(\mu^n,P_t\mu^n)} & \le 2^{p-1} \left( \E{W_p^p(\mu^n,P_{1/n}\mu^n)} +   \E{W_p^p(P_{1/n}\mu^n, P_{t}\mu^n )} \right)\\
 & \le  2^{p-1} \left( C_0 n^{-p/2} +   \E{W_p^p(P_{1/n}\mu^n, P_{t}\mu^n )} \right),
 \end{split}
\end{equation}
having used \eqref{eq:dispersion} in the second inequality. Thus, we are reduced to bound from above the expectation of $W_p^p(P_{1/n}\mu^n, P_{t}\mu^n )$. Since this random variable is always bounded from above by $\diam(\T)^p$, by choosing e.g.\  $d=1/2$ in \eqref{eq:polynomial} of 
\cref{prop_Bernstein} below, we see that, if we pick $C_1 = (\ln a)^{-1} K$ sufficiently large -- precisely such that $5-Kd^2<p/2$, we can safely reduce ourselves to argue on the event $\|\rho_{t,n}-1\|\le 1/2$, so that $P_t \mu^n = \rho_{t,n}\mm$ has a density uniformly bounded from below by $1/2$. On such event, we use \eqref{eq:peyre-riesz} (with $\mu = P_{1/n} \mu^n$ and $\nu = P_{t} \mu^n$),  and we find
\begin{equation}\label{eq:good-bound}
 W_p^p(P_{1/n}\mu^n, P_{t}\mu^n ) \le C \int_{\T} \left| (-\Delta)^{-1/2} ( \rho_{1/n,n} - \rho_{t,n}) \right|^p \de \mm.
\end{equation}
where $C = C(\T, p)<\infty$. By the linearity of the operator $(-\Delta)^{-1/2}$, we collect the identity
$$  (-\Delta)^{-1/2}	 ( \rho_{1/n,n} - \rho_{t,n}) (x)= \frac 1 n \sum_{i=1}^n  [(-\Delta)^{-1/2} (p_{1/n}-p_{t})] (X_i- x),
$$ and notice that, for each $x\in \T$, the random variables $\varphi_i(x):= [(-\Delta)^{-1/2} (p_{1/n}-p_{t})] (X_i- x)$, for $i=1,\ldots, n$ are independent and centered. After taking expectation in \eqref{eq:good-bound}, we see that the thesis amounts to bound from above the quantity
$$ \int_{\T} \E{\left| \frac 1 n \sum_{i=1}^n \varphi_i(x) \right|^p }\de \mm(x),
$$
where we recognize, for every $x$, the $p$-th moment of a sum of independent centered random variables. By Rosenthal's inequality, \cite{rosenthal1970subspaces}, we have for some constant $C = C(p)<\infty$,
\begin{equation}\label{eq:rosenthal}
 \E{\left| \frac 1 n \sum_{i=1}^n \varphi_i(x) \right|^p } \le C \left[ \frac{1}{n^{p-1}} \E{\left| \varphi(x) \right|^p } + \frac{1}{n^{p/2} } \E{\left| \varphi(x) \right|^2 }^{p/2} \right],
\end{equation}
where we write $\varphi :=  [(-\Delta)^{-1/2} (p_{1/n}-p_{t})] (X_1- x)$.
 To conclude, we follow very closely the argument in \cite{Le17} from eq. (34) onwards (in the case $d=2$), so we omit some details. We collect first the uniform bound, valid for $0 < s \le 1/2$:
$$ \sup_{z \in \T } \left|  (-\Delta)^{-1/2} (p_{s} - 1) (z) \right| \le  \frac{C}{s^{1/2}},$$
which we apply in particular to $s \in \{1/n, t\}$, yielding
$$ \sup_{z \in \T } \left| \varphi (z) \right|  \le \sup_{z \in \T } \left|  (-\Delta)^{-1/2} (p_{1/n} - 1) (z) \right| + \sup_{z \in \T } \left|  (-\Delta)^{-1/2} (p_{t} - 1) (z) \right|  \le Cn^{1/2}.$$
Then,  by the representation $(-\Delta)^{-1} = \int_0^\infty P_s ds$, we find for any $f \in L^2(\T)$ with $\int_{\T} f \de\mm = 0$ that
\begin{equation*} \begin{split}
                   \int_{\T} [(-\Delta)^{-1/2}f]^2 \de \mm & = \int_{\T} f (-\Delta)^{-1} f \de \mm \\
                   & = \int_0^\infty \int_\T f P_sf \de \mm \de s= \int_0^\infty \int_\T (P_{s/2}f)^2 \de \mm \de s.
                  \end{split}
\end{equation*}
We use this identity in our case, i.e., with $f = p_{1/n}-p_t$, yielding
\begin{equation*}\begin{split}
 \E{\left| \varphi(x) \right|^2 }  & = \int_{\T} [ (-\Delta)^{-1/2}(p_{1/n} - p_t)]^2(y-x)  \de \mm(y)\\
& = \int_0^\infty \int_{\T} (p_{s/2+1/n}(y-x) - p_{s/2+t}(y-x) )^2 \de \mm(y) \de s\\
& = \int_0^\infty [ p_{s+2/n}(0) + p_{s+2t}(0) - 2 p_{s+t+1/n}(0)] \de s \\
& = O\left( -\log(2/n) - \log(2t) +2 \log(t+1/n) + 1 \right) = O( \ln \alpha),
\end{split}
\end{equation*}
where in developing the square we invoked the semigroup property (so that, for any $t_1, t_2 >0$, $\int_\T p_{t_1}(y-x) p_{t_2}(y-x) \de\mm(y)=P_{t_1+t_2} \delta_x(x)=p_{t_1+t_2}(0)$), and the final asymptotics can be computed directly from (\ref{def p_t}).

Combining these bounds, we find
$$ \E{\left|\varphi(x) \right|^p } \le \sup_{z\in \T}\left|\varphi(z) \right|^{p-2}  \E{\left| \varphi(x) \right|^2 }\\
 \le C n^{(p-2)/2} \ln \alpha ,$$
and therefore we bound from above the right hand side in \eqref{eq:rosenthal} with
\begin{equation}
\left[ \frac{1}{n^{p-1}} \E{\left| \varphi(x) \right|^p } + \frac{1}{n^{p/2} } \E{\left| \varphi(x) \right|^2 }^{p/2} \right] \le C \frac{\ln \alpha}{n^{p/2}} + C \left( \frac{\ln \alpha}{n} \right)^{p/2}
\end{equation}
and the thesis follows.
\end{proof}

In the proof above we used a regularizing property of the heat semigroup, when acting on empirical measures, as established in \cite{AG} (see Theorem~3.3 and Remark 3.17 therein), that we report here.

\begin{proposition}\label{prop_Bernstein}
If $\mu^n$ are as in \eqref{eq:empirical} and $P_t\mu^n=\rho_{t,n}\mm$, then
$$
\P{\{\|\rho_{t,n}-1\|_\infty>d\}}\leq \frac{C_3(\T)}{d^2t^3}a^{-nt d^2}
$$
for some $C_3(\T)>0$ and $a=a(\T)>1$. In particular, if $d\geq n^{-1}$ and $t=(\ln a)^{-1}K n^{-1}\ln n$ with $K\geq 1$, then
\begin{equation}\label{eq:polynomial}
\P{\{\|\rho_{t,n}-1\|_\infty>d\}}\leq C_3(\T)(\ln a)^3 n^{5-Kd^2}.
\end{equation}
\end{proposition}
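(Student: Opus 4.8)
The plan is to reduce the uniform (in $x\in\T$) estimate to a pointwise Bernstein bound, then upgrade it via a covering argument that uses the equi-Lipschitz regularity of $\rho_{t,n}$. Writing the density explicitly, $\rho_{t,n}(x)-1=\frac1n\sum_{i=1}^n(p_t(x-X_i)-1)$ is a normalized sum of i.i.d.\ random variables, which are centered because $\int_\T p_t(x-y)\de\mm(y)=1$. From \eqref{def p_t} one reads off, for $0<t\le\frac12$, the elementary bounds $\|p_t\|_{L^\infty(\T)}\le C/t$ and $\|\nabla p_t\|_{L^\infty(\T)}\le C/t^{3/2}$ with $C=C(\T)$ (the $\mathbf{n}=0$ term of the lattice sum dominates, the remainder being exponentially small in $1/t$), while the semigroup property gives the second moment $\E{(p_t(x-X_i)-1)^2}=\int_\T p_t^2\,\de\mm-1=p_{2t}(0)-1\le C/t$. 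In particular every realization $\rho_{t,n}$ is $(C/t^{3/2})$-Lipschitz on $\T$, uniformly in the points.

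Fix $x\in\T$. Since each summand $p_t(x-X_i)-1$ is bounded by $C/t$ with variance at most $C/t$, Bernstein's inequality applied to $\sum_{i=1}^n(p_t(x-X_i)-1)$ gives
$$\P{\{\,|\rho_{t,n}(x)-1|>d\,\}}\le 2\exp(-c\,ntd^2)$$
for a constant $c=c(\T)>0$, in the range of $d$ that matters here (one may always assume $d\le\|p_t\|_{L^\infty}\asymp 1/t$, since otherwise the event is empty, and only bounded $d$ is used in the sequel, so the Poissonian regime -- where one would pass to the sharper Bennett form -- needs no special treatment).

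To pass to the supremum, fix a $\delta$-net $\{x_1,\dots,x_N\}\subset\T$ with $\delta\asymp d\,t^{3/2}$, so that $N\le C(\T)\delta^{-2}\asymp C(\T)/(d^2t^3)$. By the Lipschitz bound, on $\{\|\rho_{t,n}-1\|_\infty>d\}$ there is $j$ with $|\rho_{t,n}(x_j)-1|>d/2$, and a union bound over the net yields
$$\P{\{\|\rho_{t,n}-1\|_\infty>d\}}\le N\max_j\P{\{|\rho_{t,n}(x_j)-1|>d/2\}}\le\frac{C_3(\T)}{d^2t^3}\exp(-c'\,ntd^2),$$
with $c':=c/4$ (the factor $2$ and the net constant being absorbed into $C_3$), which is the first assertion with $a:=e^{c'}>1$. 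For the ``in particular'' part, substitute $t=(\ln a)^{-1}Kn^{-1}\ln n$ and use $d\ge n^{-1}$: then $ntd^2=(\ln a)^{-1}Kd^2\ln n$, whence $a^{-ntd^2}=n^{-Kd^2}$, while $t^{-3}=(\ln a)^3K^{-3}(n/\ln n)^3\le(\ln a)^3 n^3$ (using $K\ge1$ and $\ln n\ge1$) and $d^{-2}\le n^2$; multiplying the three factors gives $C_3(\ln a)^3 n^{5-Kd^2}$.

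The single-point concentration is routine; the step that needs the sharp input is the net argument. The prefactor $1/(d^2t^3)$ is precisely the cardinality of a net of radius $\asymp d\,t^{3/2}$, the largest scale on which the gradient bound $\|\nabla p_t\|_{L^\infty}\lesssim t^{-3/2}$ still controls the oscillation of $\rho_{t,n}$ by $d/2$; checking that this produces exactly the power $t^{-3}$ (and not a worse one) is the only delicate bookkeeping.
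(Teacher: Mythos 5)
The paper does not actually prove this proposition—it is quoted from \cite{AG} (Theorem~3.3 and Remark~3.17 there)—and your argument is essentially the one used in that reference: a pointwise Bernstein bound with $M,\sigma^2\lesssim 1/t$, upgraded to a uniform bound via the Lipschitz estimate $\|\nabla p_t\|_\infty\lesssim t^{-3/2}$ and a union bound over a net of cardinality $\asymp d^{-2}t^{-3}$, with the bookkeeping for the ``in particular'' part done correctly. One caveat you half-acknowledge and that is worth making explicit: Bernstein yields the exponent $-c\,ntd^2$ only for $d$ bounded by an absolute constant (for $1\ll d\ll 1/t$ it degrades to $-c\,ntd$, and in fact the Poissonian lower tail $\exp(-c\,ntd\ln d)$ shows the stated bound cannot hold for all $d$ when $nt$ is large), so the proposition should be read with an implicit restriction such as $d\le 1$—which covers every use made of it in the paper ($d=1/2$ and $d=c_n/2\to0$).
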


Our strategy for proving \cref{tmain} will be to adjust the parameters $K=K_n\to\infty$ and $d=d_n\to 0$ in such a way that $Kd$ is sufficiently
large, so that the probability of the deviation from the constant density $1$ will have the power like decay we need with respect to $n$.

We will also need $L^p$ estimates on $\rho_{t,n}$, provided by the following proposition.

\begin{proposition}\label{prop_Lp}
Let $t_n$ as in \cref{tmain}, and $K=K_n$ related to $t_n$ as in \cref{prop_Bernstein}. Fixed $k>0$, take $c_n \rightarrow 0^+$ such that
\begin{equation}\label{cn}
\liminf_n K_nc_n^2 > k+5.
\end{equation}
Then
$$
\sup\left\{n^k\E{ \mathbf{1}_{\{\|\rho_{t_n,n}-1\|_\infty >c_n\}} \int_{\T}|\rho_{t_n,n}-1|^p\de\mm}:\,\, n\geq 2\right\}<\infty.
$$
\end{proposition}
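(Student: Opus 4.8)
The plan is to dominate the integrand by $\|\rho_{t_n,n}-1\|_\infty^p$ and then feed the tail bound of \cref{prop_Bernstein} through the distribution function. Set $A_n:=\{\|\rho_{t_n,n}-1\|_\infty>c_n\}$. Since $\mm(\T)=1$ we have $\int_\T|\rho_{t_n,n}-1|^p\de\mm\le\|\rho_{t_n,n}-1\|_\infty^p$, so it suffices to bound $n^k\,\E{\mathbf 1_{A_n}\|\rho_{t_n,n}-1\|_\infty^p}$ uniformly in $n$. First I would collect a few elementary facts. (i) The deterministic bound $\|\rho_{t_n,n}-1\|_\infty\le M_n:=C(\T)\,n$ holds, because $\rho_{t_n,n}(x)=\frac1n\sum_ip_{t_n}(X_i-x)\le\|p_{t_n}\|_\infty=p_{t_n}(0)\le C(\T)/t_n$ and $1/t_n=(\ln a)\,n/(K_n\ln n)\le(\ln a)\,n$ once $K_n\ln n\ge1$. (ii) The hypotheses on $t_n$ force $K_n\to\infty$ and $K_n=n^{\smallo(1)}$ (the latter since $nt_n=(\ln a)^{-1}K_n\ln n$ and $\ln(nt_n)\ll\ln n$); combined with $\liminf_nK_nc_n^2>k+5$ this yields $c_n^{-2}=n^{\smallo(1)}$ (hence $c_n^{p-2}=n^{\smallo(1)}$ for every $p$) and $c_n\ge n^{-1}$ for $n$ large. (iii) With this $t_n$, the bound of \cref{prop_Bernstein} reads, for every $d>0$,
\begin{equation*}
\P{\|\rho_{t_n,n}-1\|_\infty>d}\le\frac{C_3}{d^2t_n^3}\,a^{-nt_nd^2}\le C_3(\ln a)^3\,\frac{n^3}{d^2}\,n^{-K_nd^2},
\end{equation*}
using $a^{-nt_nd^2}=n^{-K_nd^2}$ and $t_n^{-3}\le(\ln a)^3n^3$.

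Next I would use the layer-cake identity
\begin{equation*}
\E{\mathbf 1_{A_n}\|\rho_{t_n,n}-1\|_\infty^p}=\int_0^{M_n}p\lambda^{p-1}\,\P{A_n\cap\{\|\rho_{t_n,n}-1\|_\infty>\lambda\}}\de\lambda
\end{equation*}
and split at the threshold $c_n$: for $\lambda\le c_n$ the event inside the probability is exactly $A_n$, contributing $c_n^p\,\P{A_n}$, whereas for $\lambda\ge c_n$ it equals $\{\|\rho_{t_n,n}-1\|_\infty>\lambda\}$. Inserting the tail bound (iii) in both regimes gives
\begin{equation*}
\E{\mathbf 1_{A_n}\|\rho_{t_n,n}-1\|_\infty^p}\le C_3(\ln a)^3\,n^3\Bigl(c_n^{p-2}\,n^{-K_nc_n^2}+p\int_{c_n}^{M_n}\lambda^{p-3}\,n^{-K_n\lambda^2}\de\lambda\Bigr).
\end{equation*}

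For the remaining integral I would substitute $u=\lambda\sqrt{K_n\ln n}$, so that $n^{-K_n\lambda^2}=e^{-u^2}$, turning it into a Gaussian tail: writing $u_n:=c_n\sqrt{K_n\ln n}$, one has $u_n^2=(K_nc_n^2)\ln n\ge(k+5)\ln n\to\infty$ for $n$ large, and therefore
\begin{equation*}
\int_{c_n}^{M_n}\lambda^{p-3}\,n^{-K_n\lambda^2}\de\lambda=(K_n\ln n)^{-(p-2)/2}\int_{u_n}^{M_n\sqrt{K_n\ln n}}u^{p-3}e^{-u^2}\de u\le C_p\,(K_n\ln n)^{-(p-2)/2}(1+u_n^p)\,e^{-u_n^2},
\end{equation*}
the last step being the elementary tail estimate $\int_{u_n}^\infty u^{p-3}e^{-u^2}\de u\le C_p(1+u_n^p)e^{-u_n^2}$ valid for $u_n\ge1$ (use $u^{p-3}\le u_n^{p-3}$ if $p\le3$, and a change of variables to the incomplete Gamma function if $p>3$). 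Since $K_n=n^{\smallo(1)}$ we get $u_n=n^{\smallo(1)}$, $1+u_n^p=n^{\smallo(1)}$ and $(K_n\ln n)^{-(p-2)/2}\le n^{\smallo(1)}$ (for $p<2$ this factor grows, but only sub-polynomially), while $e^{-u_n^2}=n^{-K_nc_n^2}$ and $c_n^{p-2}=n^{\smallo(1)}$. Collecting everything, there is $C=C(\T,p)<\infty$ with
\begin{equation*}
n^k\,\E{\mathbf 1_{A_n}\|\rho_{t_n,n}-1\|_\infty^p}\le C\,n^{k+3+\smallo(1)}\,n^{-K_nc_n^2}=C\,n^{k+3+\smallo(1)-K_nc_n^2}.
\end{equation*}
Since $\liminf_nK_nc_n^2>k+5>k+3$, the exponent is eventually strictly negative, so the right-hand side is bounded (indeed tends to $0$) for $n$ large; the finitely many remaining $n$ contribute a finite amount because $\E{\int_\T|\rho_{t_n,n}-1|^p\de\mm}\le M_n^p<\infty$. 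This yields the claimed finite supremum.

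The only delicate point is the bookkeeping of the polynomial prefactors: the natural worry is that $\lambda^{p-1}$ for $p$ large, or $t_n^{-3}$ and $c_n^{-2}$, could swamp the exponential decay. The substitution $u=\lambda\sqrt{K_n\ln n}$ is precisely what resolves this, since it turns $\int\lambda^{p-3}n^{-K_n\lambda^2}\de\lambda$ into a tail integral of $u^{p-3}e^{-u^2}$ over $[u_n,\infty)$ with $u_n\to\infty$, so that this integral costs only $n^{\smallo(1)}e^{-u_n^2}=n^{\smallo(1)}n^{-K_nc_n^2}$ with no leftover power of $n$ depending on $p$; together with the deterministic $L^\infty$ bound, which caps $t_n^{-3}$ at the harmless level $n^{3+\smallo(1)}$, this is why the margin $5$ in \eqref{cn} is (more than) enough.
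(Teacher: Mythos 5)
Your route is genuinely different from the paper's: you dominate the integrand by $\|\rho_{t_n,n}-1\|_\infty^p$ and run a layer-cake argument on the sup-norm through the tail bound of \cref{prop_Bernstein}, whereas the paper works pointwise, splitting at $|\rho_{t_n,n}-1|=1$, treating the region $\{|\rho_{t_n,n}-1|\le 1\}$ simply via $\P{A_n}$ and the region $\{|\rho_{t_n,n}-1|>1\}$ via Fubini, Cavalieri's formula and a Bernstein bound $\P{|\rho_{t,n}(y)-1|>\xi}\le C\exp(-cnt\xi)$ whose exponent is \emph{linear} in $\xi$ for $\xi>1$. Your bookkeeping (the substitution $u=\lambda\sqrt{K_n\ln n}$, the sub-polynomial control of $K_n$, $c_n^{-2}$ and $u_n^p$, the final exponent count) is correct.

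The gap is in the integral over $[c_n,M_n]$: you apply the Gaussian-form bound $\P{\|\rho_{t_n,n}-1\|_\infty>\lambda}\lesssim n^{3}\lambda^{-2}n^{-K_n\lambda^2}$ for all $\lambda$ up to $M_n\sim n$. Although \cref{prop_Bernstein} is transcribed without an explicit upper restriction on $d$, a bound with exponent quadratic in $d$ is the Gaussian regime of Bernstein's inequality and does not persist for large deviations: with $|Y_i|\le C/t$ and $\E{Y_i^2}\le C/t$, Bernstein gives $\exp\bigl(-cnt\xi^2/(1+\xi)\bigr)$, i.e.\ only a linear exponent once $\xi\gtrsim 1$ (and indeed the probability that order $nt\xi$ points cluster in a ball of area $t$ decays roughly like $\exp(-cnt\xi\ln\xi)$, far more slowly than $a^{-nt\xi^2}$). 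This is exactly why the paper's own proof switches to the linear-exponent bound for $\xi>1$ rather than reusing \cref{prop_Bernstein} there. The repair is short: cut the layer-cake integral at $\lambda=1$; for $\lambda\ge 1$ bound $\P{\|\rho_{t_n,n}-1\|_\infty>\lambda}\le\P{\|\rho_{t_n,n}-1\|_\infty>1}\lesssim n^{3}n^{-K_n}$ and integrate $p\lambda^{p-1}$ up to $M_n\lesssim n$, obtaining a contribution $\lesssim n^{3+p-K_n}$, which multiplied by $n^k$ vanishes because $K_n\to\infty$. With that modification your argument is complete.
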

\begin{proof} In this proof $C$ denotes a positive constant, depending only on $\T$.
Arguing as in the proof of Theorem~3.3 in \cite{AG}, the bounds 
$$
\E{Y^2}\leq \frac{C}{t},\qquad |Y|\leq\frac{C}{t}\qquad t\in (0,1)
$$
for the random variables $Y=Y_i=p_t(X_i,y)-1$, together with Bernstein's inequality yield
$$
\P{\{|\rho_{t,n}(y)-1|>\xi\}}\leq C\exp(-nct\xi)\qquad\forall t\in (0,1),\,\,\xi>1
$$
for all $y\in\T$. 
For our choice of $t=t_n$, Fubini's theorem and Cavalieri's formula yield
$$
n^k\E{\int_{ \{|\rho_{t_n,n}-1|>1\} } |\rho_{t_n,n}-1|^p\de\mm}\leq
C\int_1^\infty n^{k-c(\ln a)^{-1}K_n\xi} \cdot \xi^{p-1}\de\xi.
$$
Thus, for $n \gg 1$
\begin{align*}
&n^k\E{\int_{ \{|\rho_{t_n,n}-1|>1\} }|\rho_{t_n,n}-1|^p\de\mm} \leq
C\int_ 1^\infty n^{\xi (k-c(\ln a)^{-1}K_n)} \xi^{p-1} d\xi \\
&\leq
C\int_1^\infty 2^{-\xi}\xi^{p-1}\de\xi < \infty.
\end{align*}
On the other hand, exploiting \cref{prop_Bernstein} along with (\ref{cn}) we get
\begin{align*}
&n^k\E{ \mathbf{1}_{\{\|\rho_{t_n,n}-1\|_\infty >c_n \}} \int_{\{|\rho_{t_n,n}-1|\leq 1\}}|\rho_{t_n,n}-1|^p\de\mm} \leq n^k \P{\{\|\rho_{t_n,n}-1\|_\infty >c_n\}} \\
&\leq C_3(\T)(\ln a)^3 n^{k+5-K_nc_n^2} \rightarrow 0. \qedhere
\end{align*}
\end{proof}

\section{Propagation of $q$-laplacian estimates and differentiation of $\int |\nabla Q_t\phi|^q\s\de\mm$}

Recalling the definition of $c\geq 0$ in \cref{ULB}, $\phi\in H^{1,q}(\T)$ satisfies in a distributional sense the inequality
\begin{equation}\label{divc} -{\rm div}\bigl(|\nabla\phi|^{q-2}\nabla\phi)+ c\geq 0.\end{equation}
Namely, for every nonnegative $\eta \in C^\infty(\T)$ we have
\begin{equation}\label{distr}  \int_\T |\nabla\phi|^{q-2}\langle \nabla\phi, \nabla \eta \rangle\s \de\mm+c \int_\T \eta \s \de\mm\geq 0. \end{equation}
In order to control the time derivative of $\int_\T |\nabla Q_t\phi|^q \de\mm$, we would like to show that (\ref{distr}) propagates with the Hopf-Lax semigroup, that is, it is satisfied also by $Q_t\phi$ for any $t \in (0,1)$. The proof of this stability property becomes much easier if we understand \eqref{divc} in the viscosity sense; this is possible thanks
to the following result (see \cite{JJ}, \cite{JLM} for the homogeneous case $g=0$ and \cref{easy_impl} below):

\begin{theorem} \label{thm_JUU} Let $f\in H^{1,q}(\T)$ and $g:\T\to\R$ be continuous. Then $-\Delta_q f+g\geq 0$ in the viscosity sense, according to \cref{DefV}, if and only if  $-\Delta_q f+g\geq 0$ in the sense of distributions.
\end{theorem}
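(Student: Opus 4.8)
The plan is to prove the two implications separately, the easy one being that a viscosity supersolution which lies in $H^{1,q}(\T)$ is a distributional supersolution, and the harder one being the converse. For the easy direction, the idea is a standard doubling/test-function argument: given a nonnegative $\eta\in C^\infty(\T)$ one wants to test \eqref{distr}; the obstruction is that a viscosity supersolution need not be smooth, so one cannot simply plug it into the weak formulation. The usual remedy is to work with the inf-convolutions $f_\varepsilon(x)=\inf_y\{f(y)+|x-y|^2/(2\varepsilon)\}$, which are semiconcave, hence twice differentiable a.e., still viscosity supersolutions of a slightly perturbed equation (by the standard properties of sup/inf-convolution in viscosity theory, e.g. \cite{CIL}), and converge to $f$; one checks the distributional inequality for the regularized functions where the expansion \eqref{expansion p-l} is available pointwise a.e., using the monotonicity of $F_q$ in the Hessian variable noted in the Remark, and then passes to the limit using the $H^{1,q}$ bound to control the gradient terms and handle the set where $\nabla\varphi$ vanishes.

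For the nontrivial implication — distributional supersolution implies viscosity supersolution — the plan is to argue by contradiction in the spirit of \cite{JLM} (and its extension \cite{JJ}) for the inhomogeneous case. Suppose $f\in H^{1,q}(\T)$ is a distributional supersolution of $-\Delta_q u + g\ge 0$ but fails the viscosity condition: there exist $x_0$ and $\varphi\in C^2(\T)$ with $\nabla\varphi(x_0)\ne 0$, $f-\varphi$ having a strict local minimum at $x_0$ (one may assume strictness and, subtracting a constant, $f(x_0)=\varphi(x_0)$), and $-\Delta_q\varphi(x_0)+g(x_0)<0$. By continuity of $g$, of $\nabla\varphi$, of $\nabla^2\varphi$, and of $F_q$ away from zero gradient, the strict inequality $-\Delta_q\varphi + g<0$ persists in a small ball $B_r(x_0)$ on which also $\nabla\varphi\ne 0$; so $\varphi$ is a classical, hence distributional, \emph{strict subsolution} there. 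The standard trick is then to bump $\varphi$ up slightly: replace $\varphi$ by $\varphi + \delta(r^2/4 - |x-x_0|^2)$ type of perturbation, or more robustly compare $f$ with $\psi := \varphi + m$ where $m>0$ is chosen smaller than $\min_{\partial B_r(x_0)}(f-\varphi)$, so that $f\ge\psi$ on $\partial B_r(x_0)$ while $f(x_0)<\psi(x_0)$. One then invokes the comparison principle for the $q$-Laplace operator with continuous right-hand side between the distributional supersolution $f$ and the (sub)solution $\psi$ on $B_r(x_0)$ — this is where the $H^{1,q}$ regularity of $f$ is essential, since it is exactly the hypothesis under which the weak comparison principle $-\Delta_q f\ge -\Delta_q\psi$, $f\ge\psi$ on $\partial B_r$ $\Rightarrow$ $f\ge\psi$ in $B_r$ holds (test the difference of the weak formulations with $(\psi-f)^+\in H^{1,q}_0(B_r)$ and use strict monotonicity of $\xi\mapsto|\xi|^{q-2}\xi$). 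This yields $f\ge\psi$ on all of $B_r(x_0)$, contradicting $f(x_0)<\psi(x_0)$.

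The main obstacle, and the point requiring care, is precisely the comparison step in the presence of the nonhomogeneous continuous term $g$ and the degeneracy/singularity of $\Delta_q$ at critical points of the test function. Because we have arranged $\nabla\varphi\ne 0$ on $\bar B_r(x_0)$, the operator $F_q(\nabla\varphi,\cdot)$ is uniformly elliptic there, so the classical strict-subsolution-plus-comparison machinery applies without touching the singular set; the role of \eqref{expansion p-l} and the gradient-nonvanishing hypothesis in \cref{DefV} is exactly to make this localization legitimate. One should also verify that the weak comparison principle is available in the form needed: since $f\in H^{1,q}$ and $\psi\in C^2$, the function $w=(\psi-f)^+$ is an admissible test function in $B_r$, and $\int_{B_r}(|\nabla\psi|^{q-2}\nabla\psi - |\nabla f|^{q-2}\nabla f)\cdot\nabla w\,\de\mm \le \int_{B_r}(g(x_0\text{-side terms}))w\,\de\mm$ combined with strict monotonicity forces $w\equiv 0$; the continuity of $g$ enters only to guarantee that the strict sign of $-\Delta_q\varphi+g$ at $x_0$ survives on a full neighborhood, so that $\psi$ is a genuine strict subsolution and no boundary-value slack is lost. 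Finally one notes, as the paper already remarks, that in the present application $g\equiv c$ is constant, so all these continuity requirements are trivially met.
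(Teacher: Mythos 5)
Your overall architecture matches what the paper does: the hard implication (distributional $\Rightarrow$ viscosity) is exactly the comparison-principle argument of \cite{JLM, JJ}, which the paper cites rather than reproves, and your easy implication (viscosity $\Rightarrow$ distributional) via inf-convolution coincides with the paper's sketch in \cref{easy_impl}, since the Hopf--Lax semigroup with quadratic cost used there \emph{is} the inf-convolution. Your reconstruction of the comparison step is sound: choosing $m$ strictly below $\min_{\partial B_r}(f-\varphi)$ so that $(\psi-f)^+$ is compactly supported in $B_r$ by lower semicontinuity, subtracting the two weak formulations (the $g$-terms cancel, leaving the strict negativity of the subsolution), and invoking strict monotonicity of $\xi\mapsto|\xi|^{q-2}\xi$ is precisely the JLM mechanism; the only point you leave implicit is that the conclusion $f\geq\psi$ \emph{a.e.} contradicts the pointwise inequality $f(x_0)<\psi(x_0)$ only for the lsc representative satisfying $f(x)=\operatorname{ess\,lim\,inf}_{y\to x}f(y)$, which is the representative for which the equivalence is stated in \cite{JLM}.

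The one genuine soft spot is in the easy direction, at the step you describe as ``handle the set where $\nabla\varphi$ vanishes.'' After regularization you only obtain the pointwise inequality $-\Delta_q f_\varepsilon+g\geq 0$ a.e.\ on the open set $\Omega_\varepsilon=\{\nabla f_\varepsilon\neq 0\}$, because the viscosity definition (and the expansion \eqref{expansion p-l}) gives no information at critical points when $1<q<2$. Passing from this to the distributional inequality \eqref{distr} on all of $\T$ requires ruling out a negative singular contribution of the distribution $-\div(|\nabla f_\varepsilon|^{q-2}\nabla f_\varepsilon)$ charging the critical set $\{\nabla f_\varepsilon=0\}$; an $H^{1,q}$ bound on gradients does not do this by itself. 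The mechanism supplied by \cref{easy_impl} is that $f_\varepsilon$ is $C^{1,1}$ and $q>1$, so the vector field $|\nabla f_\varepsilon|^{q-2}\nabla f_\varepsilon$ is continuous and vanishes on $\partial\Omega_\varepsilon\cap\{\nabla f_\varepsilon=0\}$, whence its flux through that boundary is null (with a coarea approximation of $\Omega_\varepsilon$ by the sublevel sets $\{|\nabla f_\varepsilon|>\tau\}$ if $\Omega_\varepsilon$ is not smooth). You should make this step explicit; as written, your argument establishes the inequality only against test functions supported where the regularized gradient does not vanish.
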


We are going to use \cref{thm_JUU} both ways: first we pass from the distributional sense for $\phi$, granted by \eqref{Lin-PDE}, to the viscosity
sense, then we pass from the viscosity sense to the distributional sense for $Q_s\phi$ in the proof of \cref{fine_Lambda}.

Then, let us show propagation of the estimate $-\Delta_q \phi +c\geq 0$ to $Q_t\phi$ in the viscosity sense. Actually it will be usefult to prove this property for 
the Hopf-Lax semigroup associated to any power $r>1$. We provide a direct proof, even though the statement could directly follow by the general fact that
viscosity supersolutions to $-\Delta_q +c\geq 0$ are stable under translations in the dependent and independent variables, and infimum.

\begin{proposition}\label{Prop-viscos}
Let $f:\T\to\R$ be lower semicontinuous and satisfying $-\Delta_q f+c\geq 0$ in the viscosity sense and $r\in (1,\infty)$. Then for all $t>0$ the function 
$$
f_t(x):=\min_{y \in \T} \left\{ f(y)+\frac{{\mathbf d}^r(x,y)}{rt^{r-1}} \right\}.
$$
still satisfies $-\Delta_q f_t+c\geq 0$ in the viscosity sense.
\end{proposition}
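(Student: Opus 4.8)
The plan is to verify the viscosity supersolution inequality for $f_t$ directly, using only that $-\Delta_q(\cdot)+c$ has no explicit $x$-dependence (translation invariance on the flat torus) together with lower semicontinuity and compactness; the specific kernel $\mathbf{d}^r(x,y)/(rt^{r-1})$ plays no essential role beyond guaranteeing that the minimum defining $f_t$ is attained. First I would record the preliminary facts that $f_t$ is again lower semicontinuous and $\not\equiv+\infty$, so that it is an admissible object for \cref{DefV}: lower semicontinuity follows from a routine compactness argument (given $x_k\to x$, take minimizers $y_k$ for $x_k$, extract a convergent subsequence $y_k\to y$, and combine lsc of $f$ with continuity of $\mathbf{d}$), while $f_t<+\infty$ somewhere because $f$ is bounded below on the compact torus and $f\not\equiv+\infty$.

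The core of the argument is the classical ``translate the test function'' mechanism. I would fix $x_0\in\T$ and $\varphi\in C^2(\T)$ with $f_t-\varphi$ having a local minimum at $x_0$ and $\nabla\varphi(x_0)\neq0$, and aim to deduce $-\Delta_q\varphi(x_0)+c\geq0$. Since $\T$ is compact and $y\mapsto f(y)+\mathbf{d}^r(x_0,y)/(rt^{r-1})$ is lsc and bounded below, I pick a minimizer $y_0$, so $f_t(x_0)=f(y_0)+\mathbf{d}^r(x_0,y_0)/(rt^{r-1})$. Choosing lifts $\tilde x_0,\tilde y_0\in\R^2$ with $|\tilde x_0-\tilde y_0|=\mathbf{d}(x_0,y_0)$ and setting $v:=\tilde x_0-\tilde y_0$, translation by $v$ on $\T$ maps $y_0$ to $x_0$ and satisfies $\mathbf{d}(y+v,y)=\min_{n\in\Z^2}|v+n|\leq|v|=\mathbf{d}(x_0,y_0)$ for all $y$. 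For $y$ near $y_0$, set $x:=y+v$ (near $x_0$): local minimality gives $f_t(x)-\varphi(x)\geq f_t(x_0)-\varphi(x_0)$, while the definition of $f_t$ together with the distance bound gives $f_t(x)\leq f(y)+\mathbf{d}^r(x,y)/(rt^{r-1})\leq f(y)+\mathbf{d}^r(x_0,y_0)/(rt^{r-1})$. Combining these with the identity at $x_0$ and cancelling the common kernel term, I obtain
$$f(y)\ \geq\ \varphi(y+v)-\varphi(x_0)+f(y_0)=:\tilde\varphi(y)\qquad\text{for }y\text{ near }y_0,$$
with equality at $y=y_0$ since $y_0+v=x_0$. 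Thus $\tilde\varphi\in C^2(\T)$ touches $f$ from below at $y_0$ with $\nabla\tilde\varphi(y_0)=\nabla\varphi(x_0)\neq0$, so the viscosity supersolution property of $f$ yields $-\Delta_q\tilde\varphi(y_0)+c\geq0$; and since $\tilde\varphi$ is a spatial translate of $\varphi$ plus a constant and $\Delta_q$ is translation invariant, $\Delta_q\tilde\varphi(y_0)=\Delta_q\varphi(x_0)$, which closes the argument.

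I do not expect a serious obstacle here: the only points needing genuine care are the torus bookkeeping — selecting the translation vector $v$ and using the one-sided bound $\mathbf{d}(y+v,y)\leq|v|$, which is all that is needed and sidesteps any discussion of uniqueness of minimal geodesics — and checking that $f_t$ remains lsc and finite somewhere; everything else is the standard inf-convolution / Hopf–Lax stability computation, which works precisely because the operator has constant coefficients. The same proof applies verbatim on any flat torus $\R^d/\Z^d$, consistent with the remark after \cref{ULB}.
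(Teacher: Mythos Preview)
Your proposal is correct and follows essentially the same ``translate the test function'' argument as the paper: pick a minimizer $y_0$ for $f_t(x_0)$, shift $\varphi$ by $x_0-y_0$, and check that the shifted function touches $f$ from below at $y_0$, then invoke translation invariance of $\Delta_q$. The only differences are cosmetic: the paper first reduces (without loss of generality) to a \emph{global} minimum with $f_t(x_0)=\varphi(x_0)$ and uses directly that $\mathbf d(x,y)=\mathbf d(x_0,y_0)$ when $x-y=x_0-y_0$ on $\T$, whereas you work locally and use only the one-sided bound $\mathbf d(y+v,y)\le|v|$ after choosing a geodesic lift $v$; your extra verification that $f_t$ is lsc and not identically $+\infty$ is a point the paper leaves implicit.
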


\begin{proof} Given $t>0$ and $x_0\in\T$, let $y_0\in\T$ be a point where the minimum in the definition of $f_t$ is attained, so that
$$f_t(x_0)=f(y_0)+\frac{{\mathbf d}^r(x_0,y_0)}{rt^{r-1}}.$$ 
Consider $\varphi\in C^2(\T)$ such that $f_t-\varphi$ has a local minimum in $x_0$ and, with no loss of generality, assume that the minimum is global and
$f_t(x_0)=\varphi(x_0)$. \\
If we set $\psi(x):=\varphi(x-y_0+x_0)$, we claim that $\phi-\psi$ has a minimum in $y_0$, equal to $-{\mathbf d}^r(x_0,y_0)/(rt^{r-1})$. From this we would obtain
					$$
					F_q(\nabla\psi(y_0),\nabla^2\psi(y_0))\leq c
					$$
					thus
					$$
					F_q(\nabla\varphi(x_0),\nabla^2\varphi(x_0))\leq c.
					$$
To prove the claim, we notice that
					$$
					\phi(y_0)-\psi(y_0)=\phi(y_0)-\varphi(x_0)=\phi(y_0)-f_t(x_0)
					=-\frac{1}{rt^{r-1}}{\mathbf d}^r(x_0,y_0),
					$$
					while in the other hand $f_t(x)\ge\varphi(x)$ implies
					$$
					\phi(y)+\frac{1}{rt^{r-1}}{\mathbf d}^r(x,y) \geq\varphi(x)\quad\forall x,\,y.
					$$
					Choosing $y=x-x_0+y_0$ (understanding the sum modulo $\Z^2$) we obtain
					$$
					\phi(y)-\psi(y)\geq - \frac{1}{rt^{r-1}}{\mathbf d}^r(x_0,y_0) \quad\forall y,
					$$
					as desired.
					\end{proof}
			
\begin{remark}\label{easy_impl} We can use \cref{Prop-viscos} to provide a sketchy proof of the implication from viscous to distributional
granted, also in the converse direction, by \cref{thm_JUU}. Indeed, we can use the Hopf-Lax semigroup with power $r=2$ to
obtain that $f_s=Q_sf$ still satisfy $-\Delta_q f_s+c\geq 0$ in the viscosity sense and $C^{1,1}$ regularity of $f_s$. Since $f_s\to f$ in $H^{1,q}(\T)$ as $s\to 0^+$, it is then sufficient to show
that $-\Delta_q f_s+c\geq 0$ in the sense of distributions. Here we can use the $C^{1,1}$ regularity of $f_s$ to build appropriate test functions $\phi$, of the form
$$
\phi(x)=f_s(x_0)+\langle\nabla f_s(x_0),x-x_0\rangle+\frac 12\langle\nabla^2 f_s(x_0)(x-x_0),(x-x_0)\rangle-\epsilon|x-x_0|^2
$$
at any point $x_0\in\T$ where $\nabla f_s(x_0)\neq 0$ and $\nabla^2f_s(x_0)$ exists. This leads to the validity of $-\Delta_q f_s+c\geq 0$ almost everywhere
in the open set $\Omega_s=\{|\nabla f_s|\neq 0\}$. Then, one obtains the validity of the inequality in the sense of distributions first in $\Omega_s$ and then on the whole of $\T$,
using the fact that the flux of the continuous vector field $|\nabla f_s|^{q-2}\nabla f_s$ is null on the boundary (because $q>1$). If $\Omega_s$ is not
smooth one can perform a further approximation, since
$$
\frac{1}{\epsilon}\int_0^\epsilon\int_{\{|\nabla f_s|=\tau\}}|\nabla f_s|^{q-1}\de \Haus^1\de\tau=
\int_{\{0<|\nabla f_s|<\epsilon\}}|\nabla f_s|^q\de\mm
$$
tends to $0$ as $\epsilon\to 0$.
\end{remark}

Now, we apply \cref{Prop-viscos} with $f=\phi$ and $r=p$ in order to estimate the variation in time of  $\int_\T |\nabla Q_t\phi|^q \de\mm$.

\begin{lemma}\label{fine_Lambda}
Let $\Lambda(t):=\int_\T |\nabla Q_t\phi|^q \de\mm$ with $\phi$ as in \eqref{ULB} and $c=\|\rho_1-\rho_0\|_\infty$. Then $\Lambda$ is Lipschitz in $[0,1]$ 
and $\frac{\de}{\de t} \Lambda(t) \leq c\Lambda(t)$ for almost every $t\in (0,1)$. In particular
\begin{equation}\label{diff-int}
\int_\T |\nabla Q_t\phi|^q \de\mm \leq e^{ct} \int_\T |\nabla\phi|^q \de \mm\qquad\forall t\in [0,1].
\end{equation}
\end{lemma}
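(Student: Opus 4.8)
The strategy is to differentiate $\Lambda(t)=\int_\T|\nabla Q_t\phi|^q\de\mm$ in time and bound the derivative using the Hamilton–Jacobi equation together with the propagated supersolution inequality from \cref{Prop-viscos}. First I would record the regularity needed: since $\phi\in H^{1,q}(\T)$ is a solution of \eqref{q-Poisson} with bounded right-hand side, it is Lipschitz (by standard regularity for the $q$-Laplacian with bounded data), so \cref{prop_HL} applies and gives that $Q_t\phi$ is Lipschitz uniformly in $t\in[0,1]$, that $t\mapsto Q_t\phi$ is Lipschitz into $C(\T)$, and that the Hamilton–Jacobi equation $\partial_t Q_t\phi+\frac{|\nabla Q_t\phi|^q}{q}=0$ holds a.e.\ in $(0,1)\times\T$. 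From the uniform Lipschitz bound one also gets that $\Lambda$ is Lipschitz on $[0,1]$ (the integrand $|\nabla Q_t\phi|^q$ is uniformly bounded and one can justify differentiating under the integral sign, or argue via the a.e.\ identity below together with the uniform bound).

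Next, the heart of the argument: compute $\frac{\de}{\de t}\Lambda(t)$ for a.e.\ $t$. Writing $u=Q_t\phi$ and differentiating,
$$
\frac{\de}{\de t}\int_\T|\nabla u|^q\de\mm=q\int_\T|\nabla u|^{q-2}\langle\nabla u,\nabla\partial_t u\rangle\de\mm
=-\int_\T|\nabla u|^{q-2}\langle\nabla u,\nabla(|\nabla u|^q)\rangle\de\mm,
$$
using $\partial_t u=-|\nabla u|^q/q$ from \eqref{HJ}. Now integrate by parts: this equals
$$
\int_\T\div\!\bigl(|\nabla u|^{q-2}\nabla u\bigr)\,|\nabla u|^q\de\mm
=-\int_\T|\nabla u|^q\,\bigl(-\Delta_q u\bigr)\de\mm.
$$
At this point I want to invoke that $u=Q_t\phi$ satisfies $-\Delta_q u+c\ge 0$ in the sense of distributions. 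This is where \cref{Prop-viscos} and \cref{thm_JUU} enter: by \cref{thm_JUU}, \eqref{divc} for $\phi$ gives $-\Delta_q\phi+c\ge 0$ in the viscosity sense; by \cref{Prop-viscos} (with $r=p$, $f=\phi$), this propagates to $-\Delta_q Q_t\phi+c\ge 0$ in the viscosity sense for every $t>0$; and by \cref{thm_JUU} again (applied to $Q_t\phi\in H^{1,q}(\T)$, which holds by the uniform Lipschitz bound), this is equivalent to $-\Delta_q Q_t\phi+c\ge 0$ in the distributional sense. Testing this distributional inequality against the nonnegative test "function" $|\nabla u|^q$ — after a routine approximation, since $|\nabla u|^q$ is only Lipschitz, not $C^\infty$, one mollifies or uses that Lipschitz nonnegative functions are admissible by density — gives
$$
\int_\T|\nabla u|^{q-2}\langle\nabla u,\nabla(|\nabla u|^q)\rangle\de\mm+c\int_\T|\nabla u|^q\de\mm\ge 0,
$$
that is, $-\int_\T|\nabla u|^q(-\Delta_q u)\de\mm\le c\int_\T|\nabla u|^q\de\mm=c\,\Lambda(t)$. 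Combining with the computation above yields $\Lambda'(t)\le c\,\Lambda(t)$ for a.e.\ $t\in(0,1)$.

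Finally, since $\Lambda$ is Lipschitz on $[0,1]$ and satisfies the differential inequality $\Lambda'\le c\Lambda$ a.e., Grönwall's lemma gives $\Lambda(t)\le e^{ct}\Lambda(0)=e^{ct}\int_\T|\nabla\phi|^q\de\mm$ for all $t\in[0,1]$, which is \eqref{diff-int}. The main obstacle I anticipate is the rigorous justification of the integration by parts and the use of $|\nabla Q_t\phi|^q$ as a test function: the chain-rule manipulation $\langle\nabla u,\nabla(|\nabla u|^q)\rangle$ presumes enough regularity of $u=Q_t\phi$ to make sense of second derivatives, so one should either work with the a.e.\ differentiability and semiconcavity properties of Hopf–Lax solutions (which guarantee $u$ is, for $t>0$, locally semiconcave hence twice differentiable a.e., with the distributional Laplacian controlled), or regularize $u$ and pass to the limit, carefully tracking that the supersolution inequality survives. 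A secondary subtlety is confirming $\Lambda$ is genuinely Lipschitz (not merely absolutely continuous) — this follows from the uniform-in-$t$ Lipschitz bound on $Q_t\phi$ from \cref{prop_HL} together with the Lipschitz dependence $t\mapsto Q_t\phi\in C(\T)$, but one should also check the spatial gradients depend on $t$ in an $L^q$-Lipschitz way, which again uses the Hamilton–Jacobi equation and the uniform bounds.
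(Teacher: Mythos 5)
Your overall skeleton agrees with the paper's: propagate the supersolution inequality $-\Delta_q\phi+c\ge 0$ to $Q_t\phi$ via \cref{Prop-viscos} and \cref{thm_JUU}, deduce $\Lambda'\le c\Lambda$, and conclude by Gronwall. The gap is in how you establish the differential inequality. Your computation differentiates under the integral sign (which needs $t\mapsto\nabla Q_t\phi$ to be differentiable in $L^q$ --- \cref{prop_HL} only gives Lipschitz dependence of $Q_t\phi$ in $C(\T)$), takes a spatial gradient of the a.e.\ identity $\partial_tu=-|\nabla u|^q/q$, and then tests the distributional inequality against $|\nabla u|^q$. This last step is the most serious problem: you assert that $|\nabla u|^q$ is Lipschitz, but $\nabla Q_t\phi$ is merely $L^\infty$, so $|\nabla u|^q$ is in general not even continuous, and to make it an admissible $H^{1,q}$ test function you would need control on $\nabla^2 u$. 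The semiconcavity you invoke as a fallback is itself delicate here: for $1<p<2$ the penalty $\mathbf{d}^p(x,y)/(pt^{p-1})$ is not $C^{1,1}$ near the diagonal, so the standard semiconcavity of Hopf--Lax solutions is not available off the shelf. Finally, Lipschitz continuity of $\Lambda$ does not follow from uniform boundedness of the integrand, and the $L^q$-Lipschitz dependence of $\nabla Q_t\phi$ on $t$ that you flag as "to be checked" is precisely what is missing.

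The paper sidesteps all second-order regularity by a first-order, discrete-in-time argument. Convexity of $y\mapsto|y|^q$ gives
\begin{equation*}
\Lambda(t)-\Lambda(s)\;\geq\;q\int_\T |\nabla f_s|^{q-2}\,\langle\nabla f_s,\nabla(f_t-f_s)\rangle\de\mm
\;=\;q\int_\T (f_s-f_t)\de\mu_s,
\end{equation*}
where $\mu_s=\div(|\nabla f_s|^{q-2}\nabla f_s)$ is shown to be a signed measure with $\|\mu_s\|\le 2c$ (since $-\mu_s+c\ge 0$ is a nonnegative distribution and $\mu_s$ has zero average); exchanging $s$ and $t$ yields $|\Lambda(t)-\Lambda(s)|\le 2cqL|t-s|$, hence the Lipschitz property. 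For the sharp bound, one takes $s\ge t$, uses $f_t-f_s\ge 0$ (monotonicity of the Hopf--Lax infimum in $t$) as a genuinely Lipschitz, nonnegative test function in the distributional inequality to get $\Lambda(s)-\Lambda(t)\le cq\int_\T(f_t-f_s)\de\mm$, and only in the limit $s\to t^+$ invokes the HJ equation a.e.\ together with dominated convergence to identify $\lim_{s\to t^+}\int_\T\frac{f_t-f_s}{s-t}\de\mm=\frac1q\int_\T|\nabla f_t|^q\de\mm$. You should replace your chain-rule/integration-by-parts step with this convexity argument, or else supply the missing second-order regularity of $Q_t\phi$ for all $p>1$, which is not done in your proposal.
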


\begin{proof} Thanks to \cref{Prop-viscos}, $f_t=Q_t\phi$ satisfy $-\Delta_q f_t+c\geq 0$ in the viscosity sense. Therefore \cref{thm_JUU}
grants this property also in the sense of distributions, namely (notice that the improvement from $C^\infty(\T)$ to $H^{1,q}(\T)$ follows by density and
$L^p$ integrability of $|\nabla f_t|^{q-2}\nabla f_t$)
\begin{equation}\label{propag} \int_\T |\nabla f_t|^{q-2}\langle \nabla f_t, \nabla \eta \rangle\s \de\mm+c \int_\T \eta \s \de\mm\geq 0 
\qquad\forall\eta \in H^{1,q}(\T), \eta \geq 0. \end{equation}

First, we note that, by \cref{propag}, the distribution $T:=-\div(|\nabla f_t|^{q-2} \nabla f_t)+c$ is non-negative. Thus, if $\eta \in C^\infty(\T)$
$$\scalprod T\eta \leq \scalprod T{\|\eta\|_\infty 1}={\|\eta\|}_{\infty}c$$
and then $T$ is represented by a nonnegative finite measure with mass less or equal than $c$ (here we used that $\scalprod {\div(|\nabla f_t|^{q-2} \nabla f_t)}1=0$ and therefore $\scalprod T1=c$). 
It follows that $\mu_t:=\div(|\nabla f_t|^{q-2} \nabla f_t)$ is a signed measure with $\| \mu_t\| \leq 2c$. \\
By the convexity of $y \mapsto |y|^q$ we then infer
\begin{equation}\label{eq:rough}
\Lambda(t)-\Lambda(s) \geq q\int_\T |\nabla f_s|^{q-2} \scalprod{\nabla f_s}{\nabla(f_t-f_s)} \de\mm=q\int_\T (f_s-f_t) \de\mu_s\geq -2cq\|f_t-f_s\|_\infty
\end{equation}
for every $s,\,t \in [0,1]$.
From the Lipschitz regularity of the initial datum $\phi$ (which follows by \cite[Theorem 2.1]{Te}) and \cref{prop_HL}
we deduce that the map $t \mapsto f_t$ is Lipschitz with respect to the sup norm, let us say with constant $L$. 
Hence, exchanging the roles of $t$ and $s$ we conclude that
$$|\Lambda(t)-\Lambda(s)| \leq 2cqL|t-s|,$$
as we desired.

Now, we can refine \eqref{eq:rough} as follows.
Let $t\in (0,1)$ be a differentiability point for $\Lambda$ such that $-q\frac\de{\de t}f_t=|\nabla f_t|^q$ a.e. in $\T$. 
Thanks to Rademacher's theorem and \cref{prop_HL}, both properties are satisfied for a.e. $t\in (0,1)$.
For $s\geq t$, using the inequality $f_t\geq f_s$ granted directly from the definition \eqref{eq:defHL}, as well
as the inequality $-\div(|\nabla f_s|^{q-2}\nabla f_s) +c\geq 0$ in the sense of distributions, we get
\begin{eqnarray*}
\Lambda(s)-\Lambda(t) &\leq& -q\int_\T |\nabla f_s|^{q-2} \scalprod{\nabla f_s}{\nabla(f_t-f_s)} \de\mm \\
&=&q\int_\T \div (|\nabla f_s|^{q-2}\nabla f_s)(f_t-f_s)\de\mm\\
&\leq&cq\int_\T (f_t-f_s) \de\mm, 
\end{eqnarray*}
so that
$$\frac \de{\de t} \Lambda(t)=\lim_{s \rightarrow t^+} \frac{\Lambda(s)-\Lambda(t)}{s-t} \leq \lim_{s \rightarrow t^+} cq \int_\T\frac{ f_t-f_s}{s-t} \de\mm=-cq\int_\T \frac\de{\de t} f_t \de\mm=c\int_\T |\nabla f_t|^q \de\mm,$$
which proves that $\Lambda'(t)\leq c\Lambda(t)$. Finally, the validity of \eqref{diff-int} follows by Gronwall's Lemma. \qedhere
\end{proof}

\section{Proof of \cref{ULB}}

In this section $\phi$, $c$ are as in \cref{ULB}.

\subsection{Upper bound}\label{UB}

The upper bound in \cref{ULB} can be obtained immediately by repeating the argument in Proposition~2.3 of \cite{AST}, 
involving duality and the Hopf-Lax formula. We still give the proof here for the sake of completeness. 

Since $\T$ is compact, the duality formula for $W_p^p$ can be written in the form
\begin{equation}\label{duality}
\frac 1p W_p^p(\rho_0 \mm, \rho_1 \mm)=\sup\left\{ -\int_\T f \rho_0 \de\mm+\int_\T (Q_1 f) \rho_1 \de\mm: 
\text{$f:\T\to\R$ Lipschitz} \right\}.
\end{equation}

\begin{proposition}[Upper bound]\label{prop:upper-bound}
There exists $\overline{\delta}(c,p)$ such that $\overline{\delta}(c,p) \rightarrow 0$ as $c\to 0$ and
$$W_p^p(\rho_0 \mm, \rho_1 \mm) \leq (1+\overline{\delta}(c,p)) \int_\T |\nabla \phi|^q \de\mm.$$
\end{proposition}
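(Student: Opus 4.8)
The plan is to deduce the bound straight from the duality formula \eqref{duality}: I will show that \emph{every} Lipschitz competitor $f$ satisfies $-\int_\T f\rho_0\de\mm+\int_\T(Q_1f)\rho_1\de\mm\le \tfrac1p(1+\overline{\delta})\int_\T|\nabla\phi|^q\de\mm$, and then take the supremum. This is the $q$-Poisson analogue of Proposition~2.3 in \cite{AST}; the only new point is an appropriately tuned Young inequality that absorbs the nonlinearity of the operator.

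Fix a Lipschitz $f$ and let $u_t:=Q_tf$, so that by \cref{prop_HL} the curve $t\mapsto u_t$ is Lipschitz into $C(\T)$ and $\partial_tu_t=-|\nabla u_t|^q/q$ for a.e.\ $(t,x)$. I pair it with the linear interpolation $\rho_t:=(1-t)\rho_0+t\rho_1$, which solves $\partial_t\rho_t=\rho_1-\rho_0$ and satisfies $\rho_t\ge 1-c/2$ pointwise (I assume $c<2$, the only regime of interest). Differentiating the Lipschitz function $t\mapsto\int_\T u_t\rho_t\de\mm$, integrating on $(0,1)$, and substituting the Hamilton--Jacobi equation gives
$$-\int_\T f\rho_0\de\mm+\int_\T(Q_1f)\rho_1\de\mm=\int_0^1\Bigl(-\tfrac1q\int_\T|\nabla u_t|^q\rho_t\de\mm+\int_\T u_t(\rho_1-\rho_0)\de\mm\Bigr)\de t.$$
Since $u_t\in H^{1,q}(\T)$ is an admissible test function in \eqref{q-Poisson}, the second inner integral equals $\int_\T|\nabla\phi|^{q-2}\langle\nabla\phi,\nabla u_t\rangle\de\mm\le\int_\T|\nabla\phi|^{q-1}|\nabla u_t|\de\mm$ by Cauchy--Schwarz.

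The crux is then to estimate the integrand by Young's inequality with conjugate exponents $q,p$ (note that $(q-1)p=q$) and a scaling factor $\lambda$ chosen so that $\lambda^q=1-c/2$, namely $|\nabla\phi|^{q-1}|\nabla u_t|\le\tfrac{1-c/2}{q}|\nabla u_t|^q+\tfrac{1}{p(1-c/2)^{p-1}}|\nabla\phi|^q$ pointwise. Adding $-\tfrac1q|\nabla u_t|^q\rho_t$ and using $\rho_t\ge 1-c/2$, the $|\nabla u_t|^q$ contributions combine with a nonpositive coefficient, so the whole integrand is at most $\tfrac{1}{p(1-c/2)^{p-1}}\int_\T|\nabla\phi|^q\de\mm$, \emph{independently of $f$ and $t$}. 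Integrating in $t$ and taking the supremum over $f$ in \eqref{duality} yields $W_p^p(\rho_0\mm,\rho_1\mm)\le(1-c/2)^{-(p-1)}\int_\T|\nabla\phi|^q\de\mm$, so one may take $\overline{\delta}(c,p):=(1-c/2)^{-(p-1)}-1\to0$ as $c\to0$.

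The remaining points are routine: absolute continuity of $t\mapsto\int_\T u_t\rho_t\de\mm$ and the product rule for its derivative (from the uniform Lipschitz estimates in \cref{prop_HL}, dominated convergence, and the a.e.\ identity $\partial_tu_t=-|\nabla u_t|^q/q$ read through Fubini), and the elementary fact that a Lipschitz function on the compact torus lies in $H^{1,q}(\T)$ and hence is legitimate in \eqref{q-Poisson}. I do not anticipate a genuine obstacle here; the one delicate choice is the tuning $\lambda^q=1-c/2$, which is precisely what converts the otherwise $f$-dependent Hamilton--Jacobi term $\int_\T|\nabla Q_tf|^q\rho_t\de\mm$ into a quantity that cancels, so that the estimate survives the supremum over all competitors and not merely the choice $f=\phi$.
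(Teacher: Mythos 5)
Your argument is correct and is essentially the paper's proof: both start from the duality formula, differentiate $t\mapsto\int_\T Q_tf\,\rho_t\de\mm$ along the linear interpolation, use the Hamilton--Jacobi equation and the $q$-Poisson equation tested against $Q_tf$, and close with a pointwise Young-type estimate. The paper phrases the last step as minimizing $v\mapsto\frac1q|v|^q\rho_s-|\nabla\phi|^{q-2}\langle\nabla\phi,v\rangle$ (the sharp form of your Cauchy--Schwarz plus tuned Young inequality), and since $1/(q-1)=p-1$ both routes produce the identical constant $\overline{\delta}(c,p)=(1-c/2)^{-(p-1)}-1$.
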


\begin{proof}
Let us bound uniformly the argument of the supremum in (\ref{duality}), for $f:\T\to\R$ Lipschitz, 
exploiting the PDE (\ref{q-Poisson}) satisfied by $\phi$, the fact that $Q_t f$ solves (\ref{HJ}) almost everywhere
in $(0,1)\times\T$ and dominated convergence to put $\frac \de{\de s}$ under the integral sign. If we set $\rho_t:=t\rho_1+(1-t)\rho_0$ per $t \in (0,1)$, then:
\begin{align}\label{HL}
&\int_\T (\rho_1 Q_1f-\rho_0f ) \de\mm=\int_0^1 \frac \de{\de s} \int_\T \rho_s Q_sf \de\mm \de s \nonumber \\
&=\int_0^1\int_\T \left(\rho_s \frac \de{\de s} Q_sf + (\rho_1-\rho_0)Q_sf \right) \de\mm \de s \nonumber \\
&=\int_0^1\int_\T \left( -\frac1q |\nabla Q_s f|^q \rho_s+|\nabla \phi|^{q-2} \langle \nabla \phi, \nabla Q_s f \rangle \right) \de \mm \de s \nonumber \\
&\leq \int_0^1\int_\T \left( -\frac1q |\nabla \phi|^q \rho_s^{-\frac q{q-1}}\rho_s+|\nabla \phi|^q \rho_s^{-\frac1{q-1}} \right) \de \mm \de s \nonumber \\
& =\frac 1p \int_\T \left( \int_0^1 \rho_s^{-\frac 1{q-1}} \de s \right) |\nabla \phi|^q \de\mm,
\end{align}
where for the inequality we used that $v=\rho_s^{-\frac 1{q-1}}\nabla \phi $ minimizes $v \mapsto \frac 1q |v|^q \rho_s-|\nabla \phi|^{q-2} \langle \nabla \phi, v \rangle$. \\
In conclusion
$$
W_p^p(\rho_0 \mm, \rho_1 \mm) \leq \int_\T M_q(\rho_0, \rho_1) |\nabla \phi|^q \de\mm,
$$
where $M_q(\rho_0, \rho_1)(x)=\int_0^1 \rho_s(x)^{-\frac 1{q-1}} \de s \lesssim 1$ as $c\to 0$. More precisely, since
$\|\rho_i-1\|_\infty\leq c/2$, for $c<2$ one has
$$M_q(\rho_0, \rho_1)(x) \leq  1+\overline{\delta}(c,p)  $$
with $\overline{\delta}(c,p)=(1-c/2)^{-\frac 1{q-1}}- 1$.\qedhere
\end{proof}

\subsection{Lower bound}\label{LB}


\begin{proposition}[Lower bound]
There exists $\underline{\delta}(c,p)$ such that $\underline{\delta}(c,p) \rightarrow 0$ as $c\to 0$ and
$$W_p^p(\rho_0 \mm, \rho_1 \mm) \geq (1-\underline{\delta}(c,p)) \int_\T |\nabla \phi|^q \de\mm.$$
\end{proposition}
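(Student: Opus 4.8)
The plan is to exploit the propagation estimate \eqref{diff-int} from \cref{fine_Lambda} together with the duality formula \eqref{duality}, run in the ``reverse'' direction compared to the upper bound. The key observation is that \cref{fine_Lambda} gives us control on $\int_\T|\nabla Q_t\phi|^q\de\mm$ along the whole Hopf-Lax flow, and that $\phi$ itself is an admissible competitor in the supremum \eqref{duality} (after adjusting by a constant, which does not affect the value of the argument since $\int_\T(\rho_1-\rho_0)\de\mm=0$). First I would plug $f=\phi$ into \eqref{duality}, obtaining
$$
\frac1p W_p^p(\rho_0\mm,\rho_1\mm)\geq -\int_\T\phi\rho_0\de\mm+\int_\T(Q_1\phi)\rho_1\de\mm=\int_0^1\frac{\de}{\de s}\int_\T\rho_s Q_s\phi\de\mm\,\de s,
$$
with $\rho_s=s\rho_1+(1-s)\rho_0$ as in the proof of \cref{prop:upper-bound}. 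Differentiating under the integral sign and using that $Q_s\phi$ solves \eqref{HJ} a.e., this becomes
$$
\int_0^1\int_\T\Bigl(-\tfrac1q|\nabla Q_s\phi|^q\rho_s+|\nabla\phi|^{q-2}\langle\nabla\phi,\nabla Q_s\phi\rangle\Bigr)\de\mm\,\de s.
$$

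Next I would estimate each of the two terms. For the first term, using $\rho_s\leq 1+c/2$ and the propagation bound $\int_\T|\nabla Q_s\phi|^q\de\mm\leq e^{cs}\int_\T|\nabla\phi|^q\de\mm\leq e^c\int_\T|\nabla\phi|^q\de\mm$ from \eqref{diff-int}, one controls $-\tfrac1q\int_\T|\nabla Q_s\phi|^q\rho_s\de\mm$ from below by $-\tfrac1q(1+c/2)e^c\int_\T|\nabla\phi|^q\de\mm$. For the cross term, at $s=0$ we have $Q_0\phi=\phi$, so it equals $\int_\T|\nabla\phi|^q\de\mm$; the idea is that it stays close to this for small $s$ and is not too negative overall. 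The cleanest route is to bound the cross term from below by $-\int_\T|\nabla\phi|^{q-1}|\nabla Q_s\phi|\de\mm$ and apply Hölder with exponents $q$ and $q/(q-1)=p$, giving $-\bigl(\int_\T|\nabla\phi|^q\de\mm\bigr)^{1/p}\bigl(\int_\T|\nabla Q_s\phi|^q\de\mm\bigr)^{1/q}\geq -e^{c/q}\int_\T|\nabla\phi|^q\de\mm$; alternatively, one keeps the PDE \eqref{q-Poisson} for $\phi$ tested against $Q_s\phi$ to write the cross term as $\int_\T(\rho_1-\rho_0)Q_s\phi\de\mm$ and bound $|Q_s\phi-\phi|$ uniformly using \cref{prop_HL} and the Lipschitz bound on $\phi$. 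Either way, assembling the pieces yields $\tfrac1p W_p^p\geq \tfrac1p\bigl(1-\underline\delta(c,p)\bigr)\int_\T|\nabla\phi|^q\de\mm$ for an explicit $\underline\delta(c,p)$ built from $c$, $e^c$ and the constants above, with $\underline\delta(c,p)\to 0$ as $c\to 0$.

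The main obstacle I anticipate is making the cross term estimate tight enough that the resulting $\underline\delta$ genuinely vanishes as $c\to0$: a crude Hölder bound on $\int_\T|\nabla\phi|^{q-2}\langle\nabla\phi,\nabla Q_s\phi\rangle\de\mm$ loses a full factor even when $c=0$ (it would give $W_p^p\geq 0$, useless), so one must instead integrate by parts to exploit that $\phi$ solves the PDE, writing this term as $\int_\T(\rho_1-\rho_0)Q_s\phi\de\mm$, and then use that at $c=0$ one has $\rho_1=\rho_0$ forcing it to vanish — thus its magnitude is $O(c)\cdot\|Q_s\phi\|_\infty$, controlled by the Lipschitz-in-time bound on the Hopf-Lax flow together with a bound on $\|\phi\|_\infty$ (e.g. via Poincaré and the a priori estimate $\int_\T|\nabla\phi|^q\de\mm\lesssim 1$ coming from testing \eqref{q-Poisson} with $\phi$ itself). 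Carefully combining this $O(c)$ error with the $-\tfrac1q(1+c/2)e^c$ and $e^{c/q}$ factors, and checking that $-\tfrac1q e^c+\tfrac1q=\tfrac1q(1-e^c)=O(c)$ and similarly for the remaining terms, gives the claimed $\underline\delta(c,p)\to 0$.
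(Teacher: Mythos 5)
Your overall strategy coincides with the paper's: plug $f=\phi$ into the duality formula \eqref{duality}, differentiate along the Hopf--Lax flow, and invoke \cref{fine_Lambda} to control $\int_\T|\nabla Q_s\phi|^q\de\mm$ by $e^{c}\int_\T|\nabla\phi|^q\de\mm$. The negative term $-\frac1q\int_0^1\int_\T|\nabla Q_s\phi|^q\rho_s\de\mm\de s\geq-\frac{(1+c/2)e^c}{q}\int_\T|\nabla\phi|^q\de\mm$ is handled exactly as in the paper. The gap is in the cross term. You rightly discard the H\"older route, but your replacement --- bounding $\int_\T(\rho_1-\rho_0)Q_s\phi\de\mm$ in magnitude by $O(c)\,\|Q_s\phi\|_\infty$, or its deviation from $\int_\T|\nabla\phi|^q\de\mm$ by $c\,\|Q_s\phi-\phi\|_\infty\lesssim c\,s\,\mathrm{Lip}(\phi)^q$ --- yields only an \emph{additive} error of order $c$ times a constant depending on $c$ through the Lipschitz bound on $\phi$. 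The statement to be proved is multiplicative: $W_p^p\geq(1-\underline\delta(c,p))\int_\T|\nabla\phi|^q\de\mm$ with $\underline\delta$ depending only on $c,p$. Since for fixed $c$ the energy $\int_\T|\nabla\phi|^q\de\mm$ can be arbitrarily small relative to $c$ (concentrate $\rho_1-\rho_0$ on a set of small measure while keeping $\|\rho_i-1\|_\infty=c/2$), an additive $O(c)$ remainder cannot be absorbed into $\underline\delta(c,p)\int_\T|\nabla\phi|^q\de\mm$. Worse, if the whole cross term really is treated as an $O(c)$ error, the only surviving term is the negative one and the lower bound is vacuous; your closing arithmetic ($-\frac1qe^c+\frac1q$, which presupposes a contribution $+1\cdot\int_\T|\nabla\phi|^q\de\mm$ from the cross term) is inconsistent with that reading.

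The fix is to keep the remainder in integrated-energy form rather than sup-norm form. Since by \cref{prop_HL} the Hamilton--Jacobi equation \eqref{HJ} holds a.e., one has pointwise $\phi-Q_s\phi=\frac1q\int_0^s|\nabla Q_r\phi|^q\de r\geq0$, whence
$$\Bigl|\int_\T(\rho_1-\rho_0)(Q_s\phi-\phi)\de\mm\Bigr|\leq\frac{\|\rho_1-\rho_0\|_\infty}{q}\int_0^s\int_\T|\nabla Q_r\phi|^q\de\mm\de r\leq\frac{c\,e^{c}}{q}\int_\T|\nabla\phi|^q\de\mm,$$
which is the multiplicative control you need. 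This is exactly what the paper does, in the equivalent and slightly cleaner decomposition
$$-\int_\T\phi\rho_0\de\mm+\int_\T (Q_1\phi)\rho_1\de\mm=\int_\T\phi(\rho_1-\rho_0)\de\mm+\int_\T (Q_1\phi-\phi)\rho_1\de\mm=\int_\T|\nabla\phi|^q\de\mm-\frac1q\int_0^1\int_\T|\nabla Q_s\phi|^q\rho_1\de\mm\de s,$$
after which $\rho_1\leq1+c/2$ and \eqref{diff-int} give $\underline\delta(c,p)=(p-1)(ce^c/2+e^c-1)$.
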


\begin{proof} From the duality formula, with an integration by parts and Fubini's theorem, we get
\begin{eqnarray*}
\frac 1p W_p^p(\rho_1\mm,\rho_0\mm)&\geq& -\int_\T\phi\rho_0\de\mm+\int_\T (Q_1\phi)\rho_1\de\mm=
\int_\T\phi(\rho_1-\rho_0)\de\mm+\int_\T (Q_1\phi-\phi)\rho_1\de\mm\\
&=&\int_\T|\nabla\phi|^q\de\mm-\frac 1q\int_0^1\int_\T|\nabla Q_s\phi|^q\rho_1\de\mm\de s.
\end{eqnarray*}
Now we can use first the inequality $\|\rho_1-1\|_\infty\leq c/2$ to replace $\rho_1$ with $1$ and then \cref{fine_Lambda} with
$c\geq\|\rho_1-\rho_0\|_\infty$ to
estimate
$$
\frac 1p W_p^p(\rho_1\mm,\rho_0\mm)\geq \frac 1 p\int_\T|\nabla\phi|^q\de\mm-
\frac{c e^{c}/2+e^{c}-1}{q}\int_\T|\nabla\phi|^q\de\mm,
$$
so that $\underline{\delta}(c,p)=(p-1)(c e^{c}/2+e^{c}-1)$.\qedhere
\end{proof}

\section{Proof of \cref{tmain}}

In this section we adopt the notation in the statement of \cref{tmain}.


\subsection{Upper bound}

Since $\ln (nt_n)\ll\ln n$, using \cref{prop:dispersive} and the triangle inequality for $W_p$, arguing as in \cite{AST}, the proof of the upper bound reduces to
the following estimate:
\begin{equation}\label{eq_ubfinal1}
\limsup_{n\to\infty}\biggl(\frac{n}{\ln n}\biggr)^{p/2}\biggl(\E{W_p^p(P_{t_n}\mu^n,P_{t_n}\nu^n)}-\E{\int_\T|\nabla\phi|^q\de\mm}\biggr)\leq 0
\end{equation}
where $\phi$ is the solution to \eqref{q-Poisson} with right hand side
$$
\rho_{0,n}\mm=P_{t_n}\mu^n,\qquad\rho_{1,n}\mm=P_{t_n}\nu^n.
$$ 
Now, since $t_n\gg n^{-1}\ln n$ we can use \cref{prop_Bernstein} to write $t_n$ as $(\ln a)^{-1}K_n n^{-1}\ln n$ with $K_n\geq 1$
and $c_n\to 0$ in such a way that $K_n c_n^2>2p+10$, so that
$$
\P{\{\|\rho_{i,n}-1\|_\infty>\frac {c_n} 2\}}\leq C_3(\T)(\ln a)^3 n^{5-K_nc_n^2/4}=O(n^{-p/2})\qquad i=0,1.
$$
Since $W_p(\mu,\nu)\leq {\rm diam}(\T)$ for any pair of probability measures $\mu$, $\nu$,
it follows that the contribution to \eqref{eq_ubfinal1} of the event $\{\max_i\|\rho_{i,n}-1\|_\infty>\frac {c_n} 2\}$ is null and in the complementary
event we can use \cref{ULB} to conclude.

\subsection{Lower bound}

Recall that the semigroup $P_t$ is contractive in $\T$ with respect to any $W_p$ distance; this can be easily proved taking any
coupling $\Sigma$ between $\mu$ and $\nu$ and considering the average $\bar\Sigma=\int\Sigma_zp_t(z)\de\mm(z)$ of the shifted couplings
$$
\Sigma_z:=(\tau_z\times\tau_z)_\#\Sigma\qquad\text{with}\qquad \tau_z(x)=x+z,
$$
which provides a coupling between $P_t\mu$ and $P_t\nu$ with the same cost.
Therefore, the lower bound 
\begin{equation}\label{eq_lbfinal}
\liminf_{n\to\infty}\biggl(\frac{n}{\ln n}\biggr)^{p/2}\biggl(\E{W_p^p(\mu^n,\nu^n)}-\E{\int_\T|\nabla\phi|^q\de\mm}\biggr)\geq 0
\end{equation}
can be deduced from
\begin{equation}\label{eq_lbfinal1}
\liminf_{n\to\infty}\biggl(\frac{n}{\ln n}\biggr)^{p/2}\biggl(\E{W_p^p(\rho_{0,n}\mm,\rho_{1,n})\mm}-\E{\int_\T|\nabla\phi|^q\de\mm}\biggr)\geq 0.
\end{equation}
Now, recall that the solution $\phi$ to \eqref{Lin-PDE} is the unique minimizer of the functional
$$
\Lambda_q(f):=\int_\T\frac 1q|\nabla f|^q-f(\rho_1-\rho_0)\de\mm=\int_\T\frac 1q|\nabla f|^q-(f-\bar f)(\rho_1-\rho_0)\de\mm
$$
whose minimum value is nonpositive. Hence, from the Sobolev embedding we obtain 
$$
\frac 1 q\int_\T|\nabla\phi|^q\de\mm\leq\|\rho_1-\rho_0\|_p\|\phi\|_q\leq c_S\|\rho_1-\rho_0\|_p\biggl(\int_\T|\nabla\phi|^q\de\mm\biggr)^{1/q}
$$
and then the deterministic upper bound
\begin{equation}\label{eq:ubdirq}
\int_\T|\nabla\phi|^q\de\mm\leq (\|\rho_1-\rho_0\|_p c_S q)^p.
\end{equation}
 As in the proof of the upper bound, since $nt_n\gg\ln n$, we can use this time \cref{prop_Lp} that provides
an estimate in expectation on $\|\rho_{i,n}-1\|_p^p$ to show that the contribution to \eqref{eq_lbfinal1} of the event $\{\max_i\|\rho_{i,n}-1\|_\infty>\frac {c_n} 2\}$ is null (if we also require $K_nc_n^2>2p+20$ in order to satisfy (\ref{cn}) with $\frac{c_n}2$ and $k=\frac p2$) and in the complementary event we can use Theorem~\ref{ULB} to conclude.

%
%

\printbibliography

\end{document}

\end{document}